\newtheorem{dfs}{Definition}[section]
\newtheorem{lms}[dfs]{Lemma}
\newtheorem{thms}[dfs]{Theorem}
\newtheorem{props}[dfs]{Proposition}
\newtheorem{cors}[dfs]{Corollary}
\newtheorem{rems}[dfs]{Remark}
\newtheorem*{thm*}{Theorem}
\title[$\mathrm{K}$-theoretic rigidity and slow dimension growth]
{$\mathrm{K}$-theoretic rigidity and slow dimension growth}
\address{Department of Mathematics \\ Purdue University \\ 150 N. University St. \\ West Lafayette IN \\ USA \\ 47907 }
\email{atoms@purdue.edu}
\begin{document}

\begin{abstract}
Let $A$ be an approximately subhomogeneous (ASH) C$^*$-algebra with slow dimension growth.  We prove that if $A$ is unital and simple, then the Cuntz semigroup of $A$ agrees with that of its tensor product with the Jiang-Su algebra $\mathcal{Z}$.  In tandem with a result of W. Winter, this yields the equivalence of $\mathcal{Z}$-stability and slow dimension growth for unital simple ASH algebras.  This equivalence has several consequences, including the following classification theorem:  unital ASH algebras which are simple, have slow dimension growth, and in which projections separate traces are determined up to isomorphism by their graded ordered $\mathrm{K}$-theory, and none of the latter three conditions can be relaxed in general.  
\end{abstract}

\maketitle

 \section{Introduction and statement of main results}
 A C$^*$-algebra is {\it subhomogeneous} if there is a uniform finite bound on the dimensions of its irreducible representations, and {\it approximately subhomogeneous (ASH)} if it is the limit of a direct system of subhomogeneous C$^*$-algebras.  ASH algebras form a broad class with many naturally occurring examples:
 \begin{itemize}
 \item[$\bullet$]  AF algebras, which include the simple stably finite C$^*$-algebras of graphs (\cite{R}).
 \item[$\bullet$]	 C$^*$-algebras of minimal dynamical systems on finite-dimensional spaces which are either smooth or uniquely ergodic (\cite{LP}, \cite{TW2}, \cite{TW3}).
 \item[$\bullet$] Higher-dimensional noncommutative tori (\cite{P3}).
 \item[$\bullet$] The homoclinic and heteroclinic C$^*$-algebras of 1-solenoids (\cite{T}).
 \end{itemize}
 In fact, there are no simple separable nuclear stably finite C$^*$-algebras which are known not to be ASH.
 
 This article characterizes the unital separable ASH algebras that are determined up to isomorphism by their graded ordered $\mathrm{K}$-groups.  Of necessity, one considers only algebras in which projections separate traces, as the tracial state space of the algebra will otherwise be part of any complete invariant.  Elliott conjectured c. 1990 that modulo this necessary assumption, all unital simple separable ASH algebras would be determined by their $\mathrm{K}$-groups.  We now know that this conjecture, while true in considerable generality, is too much to hope for.  The author showed in \cite{To1} and \cite{To2} that an additional condition---{\it slow dimension growth}---is required in general, a condition present in each of the examples listed above.  Finally, one needs simplicity in order to avoid phenomena detectable only using $\mathrm{K}$-theory with $(\mathrm{mod} \ p)$-coefficients (see \cite{DG} and \cite{E}).  We conclude here that these three necessary conditions are also sufficient.  
 
Our route passes through the Cuntz semigroup, an ordered Abelian semigroup consisting of equivalence classes of countably generated Hilbert modules over a C$^*$-algebra.  For a C$^*$-algebra $A$, this semigroup is denoted by $W(A)$.  Winter has proved the following remarkable theorem.
 \begin{thms}[Winter, \cite{Wi3}]\label{locfin}
 Let $A$ be a unital simple separable C$^*$-algebra with locally finite nuclear dimension.  If $W(A) \cong W(A \otimes \mathcal{Z})$, then $A \cong A \otimes \mathcal{Z}$.
 \end{thms}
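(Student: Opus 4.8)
The plan is to verify an internal, central-sequence characterization of Jiang-Su absorption and to feed it with data extracted from the decomposition-rank approximations, using the hypothesis on the Cuntz semigroup to absorb the error term. Write $A_\omega$ for the norm ultrapower of $A$ along a free ultrafilter $\omega$, $A_\omega \cap A'$ for the central sequence algebra, $\precsim$ for Cuntz subequivalence, and $T(A)$ for the tracial state space. By the reformulation of Jiang-Su absorption due to R{\o}rdam and Winter, $A \cong A \otimes \mathcal{Z}$ follows as soon as one produces, for some $n \geq 2$, a completely positive contractive order zero map $\varphi \colon M_n \to A_\omega \cap A'$ whose defect satisfies $1_{A_\omega} - \varphi(1_{M_n}) \precsim \varphi(e_{11})$ in $A_\omega \cap A'$. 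So everything reduces to building such a $\varphi$ with a sufficiently ``small'' complement.

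Constructing $\varphi$ is the technical heart, and this is where locally finite decomposition rank enters. For each finite $\mathcal{F} \subset A$ and $\varepsilon > 0$, choose a subalgebra $C \subseteq A$ of finite decomposition rank, say $d$, that contains $\mathcal{F}$ to within $\varepsilon$, and take the corresponding $(d+1)$-colored approximation: a finite-dimensional $G = G_0 \oplus \cdots \oplus G_d$ and completely positive maps $\psi \colon A \to G$, $\eta \colon G \to A$ with $\eta \circ \psi$ close to the identity on $\mathcal{F}$, $\psi$ contractive, and each $\eta|_{G_i}$ completely positive contractive order zero. Since $A$ is simple and infinite-dimensional it has no bounded-dimensional representations, so the matrix blocks of $G$ may be taken large; a large matrix block contains a unital copy of a fixed $M_n$ up to a Cuntz-small remainder, and composing with $\eta|_{G_i}$ yields $d+1$ completely positive contractive order zero maps $M_n \to A$. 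Diagonalizing over $(\mathcal{F},\varepsilon)$ and passing to $A_\omega$, these become genuine order zero maps with pairwise orthogonalizable ranges; an averaging/intertwining argument exploiting simplicity moves them into $A_\omega \cap A'$, and their sum is one completely positive contractive order zero map $\varphi \colon M_n \to A_\omega \cap A'$. Its defect $1_{A_\omega} - \varphi(1_{M_n})$ is then controlled by the approximation error $1 - \eta(1_G)$ together with the images under $\eta$ of the small matrix remainders.

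To absorb this defect I would use $W(A) \cong W(A \otimes \mathcal{Z})$. The Cuntz semigroup of a $\mathcal{Z}$-stable algebra is almost unperforated and almost divisible, so transporting these properties back shows that $A$ has strict comparison of positive elements by traces and that $T(A)$ is a well-behaved Choquet simplex, and the same regularity is inherited by $A_\omega \cap A'$ at the relevant level. Choosing $n$ large and refining the tower construction, one forces $1_{A_\omega} - \varphi(1_{M_n})$ to be uniformly thin in trace --- of rank strictly below that of $\varphi(e_{11})$ against every trace --- and then an excision-type argument for small central sequences (akin to property (SI)), available here because of strict comparison together with the structural consequences of finite decomposition rank, upgrades this to $1_{A_\omega} - \varphi(1_{M_n}) \precsim \varphi(e_{11})$. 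The criterion of the first paragraph then yields $A \cong A \otimes \mathcal{Z}$.

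The step I expect to be the main obstacle is the second one: converting the merely order zero (not multiplicative) pieces of the decomposition-rank approximation into genuine, pairwise commuting, approximately central matrix towers, and controlling how the $(d+1)$-fold coloring degrades both the matrix size one can embed unitally and the size of the defect. This calls for the structure theory of order zero maps --- their correspondence with $\ast$-homomorphisms from cones over matrix algebras --- to ``straighten'' the approximations inside $A_\omega \cap A'$, together with bookkeeping that keeps the errors accumulated over the $d+1$ colors within reach of strict comparison. A secondary difficulty is ensuring that the finite-decomposition-rank subalgebras interact well with $T(A)$, so that the trace-rank comparison of the third paragraph runs uniformly over the simplex; here again it is the simplicity of $A$, and its effect on the trace spaces of the approximating subalgebras, that makes matters manageable.
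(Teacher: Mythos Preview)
The paper does not prove this theorem. Theorem~\ref{locfin} is stated as an announced result of Winter, with the attribution ``Winter, \cite{Wi3}'' in the theorem header and the sentence ``Winter has announced the following remarkable theorem'' immediately preceding it; the reference \cite{Wi3} is listed as ``in preparation''. The present paper uses Theorem~\ref{locfin} as a black box: its own contribution is Theorem~\ref{waz}, which supplies the Cuntz-semigroup hypothesis for simple ASH algebras with slow dimension growth, and then Corollary~\ref{zstabsdg} simply combines the two. So there is no ``paper's own proof'' to compare your proposal against.

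That said, your sketch is broadly in the spirit of how results of this type are proved---a central-sequence criterion for $\mathcal{Z}$-stability, order-zero maps coming from a decomposition-rank approximation, and a comparison argument to kill the defect---and is reasonable as an outline of Winter's strategy. Two places where your sketch is optimistic: first, the passage ``the same regularity is inherited by $A_\omega \cap A'$ at the relevant level'' hides real work, since strict comparison for $A$ does not transfer for free to the central sequence algebra; second, the ``excision-type argument \ldots\ akin to property~(SI)'' is itself a substantial theorem (later isolated by Matui--Sato) whose proof needs more than strict comparison plus finite decomposition rank---one must show that tracially small positive elements in $A_\omega\cap A'$ are genuinely Cuntz-small there, and this is exactly where nuclearity/decomposition-rank structure is exploited in a delicate way. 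These are the steps where a full proof would have to do the heavy lifting, but none of this is addressed in the paper under review.
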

 \noindent
 Here $\mathcal{Z}$ denotes the Jiang-Su algebra (\cite{JS}).  Tensorial absorption of $\mathcal{Z}$---known as {\it $\mathcal{Z}$-stability}---is crucial for lifting $\mathrm{K}$-theory isomorphisms to C$^*$-algebra isomorphisms (see \cite{ET} for a discussion of this connection).  We will not define locally finite nuclear dimension here;  it is enough for us that separable ASH algebras have it (\cite{NW}, \cite{Wi1}, \cite{Wi2}).  We access Theorem \ref{locfin} with our main result.
 \begin{thms}\label{waz}
 Let $A$ be a unital simple separable ASH algebra with slow dimension growth.  It follows that 
 $W(A) \cong W(A \otimes \mathcal{Z})$.
 \end{thms}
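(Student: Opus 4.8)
The plan is to study the order-semigroup homomorphism $\iota\colon W(A)\to W(A\otimes\mathcal{Z})$ induced by the unital embedding $a\mapsto a\otimes 1_{\mathcal{Z}}$, and to show it is an order isomorphism. Three facts about the target are available for free. Since $\mathcal{Z}\otimes\mathcal{Z}\cong\mathcal{Z}$, the algebra $B:=A\otimes\mathcal{Z}$ is unital, simple, separable, nuclear and $\mathcal{Z}$-stable, and (being a tensor product of a simple unital ASH algebra with $\mathcal{Z}$) stably finite, so by R\o rdam's work it has stable rank one and strict comparison of positive elements, and its Cuntz semigroup is governed by projections and lower semicontinuous affine functions on the trace simplex. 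Because $\mathcal{Z}$ has a unique trace and $K_*(\mathcal{Z})=K_*(\mathbb{C})$, restriction of traces is an affine homeomorphism $T(B)\to T(A)$ under which $d_\tau(a\otimes 1)=d_{\tau|_A}(a)$, and $\iota$ induces an isomorphism $K_0(A)\to K_0(B)$; in particular $\iota$ is automatically compatible with all rank functions and with $K_0$. So the content of the theorem is that the order-theoretic ``room'' that $\mathcal{Z}$-stability forces into $W(B)$ is in fact already present in $W(A)$.

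Accordingly, the bulk of the argument is to extract from the ASH presentation and the slow dimension growth hypothesis two regularity properties of $W(A)$: \emph{strict comparison} (equivalently, vanishing radius of comparison) --- $a\precsim b$ whenever $d_\tau(a)<d_\tau(b)$ for every $\tau\in T(A)$, together with the evident comparison for classes of projections --- and a suitable \emph{divisibility}, namely that the rank functions of positive elements over $A$ are plentiful enough to approximate from below every strictly positive lower semicontinuous affine function on $T(A)$. I would fix a presentation $A=\varinjlim(A_i,\phi_i)$ in which each $A_i$ is a finite direct sum of recursive subhomogeneous algebras built over compact metric spaces $X_{i,j}$ whose covering dimensions are small relative to the corresponding matrix sizes $n_{i,j}$, and, for a given pair of positive elements, pass to a close approximation supported inside a single $A_i$. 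Over a finite-dimensional base space, a positive element of a matrix algebra has a lower semicontinuous rank (eigenvalue-counting) function, and the only obstruction to subdividing it, or to comparing two such elements whose rank functions are suitably ordered, is topological of magnitude controlled by $\dim X$ relative to the ranks involved. Since slow dimension growth forces $\dim X_{i,j}/n_{i,j}\to 0$ while the connecting maps multiply up the relevant ranks, this obstruction becomes negligible after passing far enough along the system, and the comparison and section-building estimates developed in \cite{To1}, \cite{To2} can be pushed through and then assembled, using the continuity of $W(\cdot)$ with respect to inductive limits, into the two desired properties of $W(A)$ itself. I expect this local-to-global comparison estimate --- controlling Cuntz comparison over a base of dimension $d$ and matrix size $n$ with an error that is $O(d/n)$ and uniform enough to survive the limit --- to be the principal difficulty.

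With strict comparison and divisibility of $W(A)$ established, it remains to match $W(A)$ with $W(B)$ through $\iota$. For injectivity as an order embedding, suppose $a\otimes1\precsim b\otimes1$ in $B$; applying the structure theory of $W(B)$ and the equalities $d_\tau(a\otimes1)=d_{\tau|_A}(a)$, one reads off enough information about the rank functions (and, in the exceptional case that $a$ is Cuntz-equivalent to a projection, the $K_0$-classes) to conclude, via strict comparison applied to the slightly shrunk elements $(a-\varepsilon)_+$ and passage to the supremum over $\varepsilon$, that $a\precsim b$. For surjectivity one must realise every class of $W(B)$ over $A$: classes of projections descend because $\iota$ is an isomorphism on $K_0$ and projections in $B$ are classified by $K_0$, while a soft class corresponds to a strictly positive lower semicontinuous affine function on $T(A)$, which by the divisibility property is the supremum of an increasing sequence of rank functions of positive elements over $A$, whose classes then form an increasing sequence in $W(A)$ with image (using strict comparison on both sides) the given class. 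As $\iota$ is an order embedding this forces it to be onto, and the proof is complete.
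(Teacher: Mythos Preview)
Your plan is correct and is essentially the paper's own approach: reduce the isomorphism $W(A)\cong W(A\otimes\mathcal{Z})$ to two regularity properties of $A$---strict comparison and density of rank functions in $\mathrm{Aff}(\mathrm{T}(A))$---and then match the two Cuntz semigroups using the known structure of $W(A\otimes\mathcal{Z})$, the identification $\mathrm{T}(A)\cong\mathrm{T}(A\otimes\mathcal{Z})$, and the $\mathrm{K}_0$-isomorphism (with weak unperforation via Gong--Jiang--Su). The only packaging difference is that the paper invokes the structural formula $W(\,\cdot\,)\cong V(\,\cdot\,)\sqcup\mathrm{SAff}(\mathrm{T}(\,\cdot\,))$ from \cite{BT} for both $A$ and $A\otimes\mathcal{Z}$ rather than analysing the map $\iota$ directly, and it takes strict comparison off the shelf from \cite{To4}; the genuine technical work (and hence the actual ``principal difficulty'') is the rank-realisation statement you call divisibility, which the paper proves as Theorem~\ref{ashrange} via the rank-constrained homotopy and extension results of Section~\ref{rankhom}, not via the estimates of \cite{To1}, \cite{To2}.
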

\noindent
The property of slow dimension growth appeared first in the early 1990s in connection with so-called AH algebras (a subclass of ASH algebras which model higher-dimensional noncommutative tori, for instance).  It was first seen as a natural condition ensuring weak unperforation of the ordered $\mathrm{K}_0$-group and the density of invertible elements in simple AH algebras (\cite{BDR}, \cite{DNNP}), and later proved to be critical for obtaining classification-by-$\mathrm{K}$-theory results (\cite{D}, \cite{EG}, \cite{G1}, \cite{G2}).  Philosophically, it excludes the possibility of unstable homotopy phenomena.  As a corollary of Theorems \ref{locfin} and \ref{waz} we obtain the following result.
 \begin{cors}\label{zstabsdg}
 Let $A$ be a unital simple separable ASH algebra.  It follows that $A$ has slow dimension growth if and only if $A \cong A \otimes \mathcal{Z}$.
 \end{cors}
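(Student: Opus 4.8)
The plan is to obtain the forward implication by feeding Theorem \ref{waz} into Theorem \ref{locfin}, and to take the converse from standard consequences of $\mathcal{Z}$-stability, so that the corollary really is a corollary.

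Suppose first that $A$ has slow dimension growth. The only external input required is that a separable ASH algebra has locally finite decomposition rank, which is available from the structure theory of these algebras (\cite{NW}, \cite{Wi1}, \cite{Wi2}); together with unitality, simplicity, and separability --- all standing hypotheses --- this places $A$ within the scope of Theorem \ref{locfin}. Theorem \ref{waz} then yields $W(A) \cong W(A \otimes \mathcal{Z})$, and Theorem \ref{locfin} upgrades this Cuntz-semigroup isomorphism to $A \cong A \otimes \mathcal{Z}$. This is the half that carries the weight of the statement, but granting Theorem \ref{waz} it is a two-line deduction.

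For the converse, suppose $A \cong A \otimes \mathcal{Z}$. Since $\mathcal{Z}$ is an inductive limit of (subhomogeneous) generalized dimension-drop algebras over the interval, $A \otimes \mathcal{Z}$ is again ASH, and compatible choices of ASH decompositions for $A$ and for $\mathcal{Z}$ exhibit it with slow dimension growth: tensoring a subhomogeneous building block with a high-order dimension-drop algebra multiplies matrix sizes while adding only a bounded amount to the dimension of the spectrum, so the relevant growth ratios tend to $0$. Alternatively one argues through comparison theory: $\mathcal{Z}$-stability forces strict comparison of positive elements (Rørdam), and for a simple unital ASH algebra this is already known to entail slow dimension growth by prior work of the author and W.\ Winter. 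Either route closes the equivalence.

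I do not anticipate a genuine obstacle. The single point deserving care is bookkeeping: checking that the standing hypotheses match those of Theorem \ref{locfin} --- in particular the locally finite decomposition rank of separable ASH algebras --- and, for the converse, citing the correct earlier statement tying $\mathcal{Z}$-stability back to slow dimension growth rather than reproving it here.
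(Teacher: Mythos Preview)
Your forward implication is exactly the paper's: Theorem \ref{waz} feeds into Theorem \ref{locfin}, with locally finite decomposition rank for separable ASH algebras supplied by \cite{NW}, \cite{Wi1}, \cite{Wi2}. Nothing to add there.

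For the converse, the paper simply invokes \cite[Theorem 5.5]{TW}, and your first alternative is essentially a sketch of that argument: writing $\mathcal{Z}$ as an inductive limit of prime dimension-drop algebras over $[0,1]$ and tensoring building blocks drives the ratio $\mathrm{dim}(X)/n$ to zero. That is fine.

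Your second alternative, however, is not on solid ground. The implication you invoke --- that strict comparison entails slow dimension growth for a simple unital ASH algebra --- is not established in the cited prior work; what is known runs the other way (slow dimension growth $\Rightarrow$ strict comparison, \cite[Theorem 1.1]{To4}). Indeed, proving that comparison-type regularity forces slow dimension growth is essentially the content of Corollary \ref{zstabsdg} itself, so this route is circular as stated. Drop that alternative and keep the first (or just cite \cite[Theorem 5.5]{TW} directly), and your proof matches the paper's.
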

 \noindent
$\mathcal{Z}$-stability is a necessary and in considerable generality sufficient condition for the classification of nuclear simple separable C$^*$-algebras via $\mathrm{K}$-theory and traces, while slow dimension growth has been conjectured to play a similar role for the subclass of simple ASH algebras.  Corollary \ref{zstabsdg} confirms this conjecture after a fashion:  their roles are at least identical.  (Winter showed that $A$ as in Corollary \ref{zstabsdg} satisfies $A \cong A \otimes \mathcal{Z}$ whenever $A$ satisfies the formally stronger condition of bounded dimension growth (\cite{Wi4});  the question of whether the reverse implication holds is open.)  Corollary \ref{zstabsdg} has several further consequences for a unital simple separable ASH algebra $A$ with slow dimension growth;  we give a brief run-down here, with references to fuller details.
 \begin{itemize}
 \item[$\bullet$] $A$ has stable rank one, answering an open question of Phillips from \cite{P2}.  In fact, all of the conclusions of \cite[Theorem 0.1]{P2} hold for $A$;  in particular, the extra conditions of items (4) and (5) in that Theorem are not necessary.
 \item[$\bullet$] The Blackadar-Handelman conjectures hold for $A$, i.e., the lower semicontinuous dimension functions on $A$ are weakly dense in the space of all dimension functions, and the latter space is a Choquet simplex.  (See Section 6 of \cite{BPT}.)
 \item[$\bullet$] The countably generated Hilbert modules over $A$ are classified up to isomorphism by the $\mathrm{K}_0$-group and tracial data in a manner analogous to the classification of W$^*$-modules over a $\mathrm{II}_1$ factor.  (See \cite[Theorem 3.3]{BT}.)
 \item[$\bullet$] The Cuntz semigroup of $A$ is recovered functorially from its $\mathrm{K}_0$-group and tracial state space.  (See \cite[Theorem 2.5]{BT} and the comment thereafter.)
 \end{itemize}

\pagebreak

Finally, we have the classification result.  
\begin{cors}\label{class}
 Let $\mathcal{C}$ denote the class of all unital simple separable ASH algebras with slow dimension growth in which projections separate traces.  If $A,B \in \mathcal{C}$ and 
 \[
 \phi:\mathrm{K}_*(A) \to \mathrm{K}_*(B)
 \]
 is a graded order isomorphism, then there is a $*$-isomorphism $\Phi:A \to B$ which induces $\phi$.
 \end{cors}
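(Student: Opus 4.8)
The plan is to promote $\phi$ to an isomorphism of Elliott invariants and then invoke a classification theorem for $\mathcal{Z}$-stable ASH algebras. By Corollary~\ref{zstabsdg} we have $A \cong A \otimes \mathcal{Z}$ and $B \cong B \otimes \mathcal{Z}$, so both algebras lie within the scope of the classification of unital simple separable $\mathcal{Z}$-stable ASH algebras by the Elliott invariant $\mathrm{Ell}(\,\cdot\,)$ --- the ordered $\mathrm{K}_0$-group together with its order unit, the group $\mathrm{K}_1$, the tracial state space $T(\,\cdot\,)$, and the pairing $r_A\colon T(A)\to S(\mathrm{K}_0(A),[1_A])$ of traces with $\mathrm{K}_0$. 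That classification is assembled from Lin's classification of unital simple separable nuclear C$^*$-algebras of tracial rank at most one satisfying the UCT (ASH algebras being nuclear and satisfying the UCT) together with Winter's theorem localizing the Elliott conjecture at $\mathcal{Z}$; the input for the latter is that a unital simple separable ASH algebra with slow dimension growth has tracial rank at most one after tensoring with a UHF algebra of infinite type. Granting this, it suffices to build an isomorphism $\mathrm{Ell}(A)\to\mathrm{Ell}(B)$ whose $\mathrm{K}$-theory part is $\phi$.

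The $\mathrm{K}$-theory part will be $\phi=(\phi_0,\phi_1)$ itself, where we regard $\mathrm{K}_*$ as recording the class of the unit so that $\phi_0([1_A])=[1_B]$ --- this being necessary in any case, since $\phi$ is to be induced by a $*$-isomorphism. For the tracial part, argue as follows. Since $A$ is stably finite, $\mathrm{K}_0(A)=\mathrm{K}_0(A)^+-\mathrm{K}_0(A)^+$ and $\mathrm{K}_0(A)^+$ consists of classes of projections over $A$, so a tracial state $\tau$ is determined by the state $r_A(\tau)$ exactly when projections separate the traces of $A$; hence the hypothesis $A\in\mathcal{C}$ makes $r_A$ injective, and likewise $r_B$. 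Since $A$ is exact (being nuclear), the theorem of Blackadar and R\o rdam on the existence of quasitraces --- together with Haagerup's result that quasitraces on exact C$^*$-algebras are traces --- shows that every state of $(\mathrm{K}_0(A),\mathrm{K}_0(A)^+,[1_A])$ is induced by a tracial state, i.e.\ $r_A$ is surjective. Being a continuous affine bijection between the compact convex sets $T(A)$ and $S(\mathrm{K}_0(A),[1_A])$, $r_A$ is then an affine homeomorphism, and so is $r_B$. Writing $\phi_0^{*}\colon S(\mathrm{K}_0(B),[1_B])\to S(\mathrm{K}_0(A),[1_A])$ for the dual (affine homeomorphism) of $\phi_0$, put
\[
\lambda \;:=\; r_A^{-1}\circ\phi_0^{*}\circ r_B\;\colon\; T(B)\longrightarrow T(A) .
\]
Then $\lambda$ is an affine homeomorphism with $\langle\lambda(\tau),x\rangle=\langle\tau,\phi_0(x)\rangle$ for all $\tau\in T(B)$ and $x\in\mathrm{K}_0(A)$, so $(\phi_0,\phi_1,\lambda)$ is an isomorphism $\mathrm{Ell}(A)\to\mathrm{Ell}(B)$. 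The classification theorem then produces a $*$-isomorphism $\Phi\colon A\to B$ realizing this isomorphism of invariants, and in particular $\mathrm{K}_*(\Phi)=(\phi_0,\phi_1)=\phi$.

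This last argument carries no genuine difficulty of its own: the weight is borne by Corollary~\ref{zstabsdg} (hence by Theorem~\ref{waz}) and by the external classification of tracial-rank-one C$^*$-algebras, the step above being the routine observation that the tracial part of the Elliott invariant becomes redundant once projections separate traces. The one point demanding care is ensuring that the external classification machinery --- in particular the tracial-rank-one property of UHF-tensored ASH algebras of slow dimension growth --- covers the full ASH class rather than only its AH subclass; that I would import from the literature.
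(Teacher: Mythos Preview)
Your overall architecture is right and close to the paper's: both use Corollary~\ref{zstabsdg} to obtain $\mathcal{Z}$-stability, then the Lin--Niu/Winter localisation (\cite{LN}, \cite{Wi5}) to reduce to the UHF-stabilised algebras $A\otimes\mathfrak{U}$. The difference lies in how the hypothesis ``projections separate traces'' is spent.

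You use it only at the end, to promote $\phi$ to an Elliott-invariant isomorphism via the bijectivity of $r_A$, and you then need an external classification of \emph{all} unital simple $\mathcal{Z}$-stable ASH algebras by the Elliott invariant. As you yourself flag, that requires knowing that $A\otimes\mathfrak{U}$ has tracial rank at most one for arbitrary simple ASH $A$ with slow dimension growth---and this was \emph{not} available when the paper was written (see the paper's own footnote, which describes exactly this step as open and ``accessible to tracial approximation techniques''). So the point you mark as ``demanding care'' is a genuine gap relative to the literature the paper can cite.

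The paper avoids this by spending the hypothesis earlier and differently: once one passes to $A\otimes\mathfrak{U}$ (unital, simple, $\mathcal{Z}$-stable), the assumption that projections separate traces yields \emph{real rank zero}. The classification problem is thereby pushed into the already-established range of \cite[Corollary~2.5]{Wi2}, which handles $\mathcal{Z}$-stable unital simple ASH algebras of real rank zero directly by graded ordered $\mathrm{K}$-theory---no separate construction of a trace-space isomorphism is needed, and no appeal to a general tracial-rank-one theorem for ASH algebras is required. In short: your route is morally correct and would go through with today's classification results, but the paper's route via real rank zero is what makes the argument self-contained against the cited literature; the missing idea in your version is precisely that ``projections separate traces'' should be cashed in for real rank zero rather than for the redundancy of $T(\,\cdot\,)$ in the invariant.
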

 \noindent
As mentioned above, the conditions of simplicity, slow dimension growth, and the separation of traces by projections are necessary in general\footnote{It is conjectured that the separation of traces by projections can be dropped from the hypotheses of Corollary \ref{class} if one augments the invariant $\mathrm{K}_*$ by the simplex of tracial states.  Corollary \ref{zstabsdg} and the results of \cite{Wi5} make some progress on this conjecture by showing that it need only be addressed for algebras which absorb a fixed UHF algebra of infinite type tensorially.  This problem should in turn be accessible to tracial approximation techniques in the spirit of Lin (see \cite{L}, for instance).}.  This result is satisfying not only for its completeness, but also because it represents the first time that the structure of the Cuntz semigroup has played a critical role in a positive classification theorem for simple C$^*$-algebras.  Its proof combines Corollary \ref{zstabsdg} with results of Lin, Niu, and Winter (\cite{LN}, \cite{Wi2}, \cite{Wi5}).

The sequel is given over to the proof of Theorem \ref{waz}.  By appealing to some known results concerning the structure of the Cuntz semigroup, the crux can be reduced to the following natural question: \\

\noindent
\hspace{8mm}Given a unital simple ASH algebra $A$ with slow dimension growth, what are the \\ \hspace*{7mm} possible ranks of positive operators in $A \otimes \mathcal{K}$?\\

\noindent
Here by the rank of a positive operator $a \in A \otimes \mathcal{K}$ we mean the function on the tracial state space of $A$ given by
\[
\tau \mapsto d_\tau(a) = \lim_{n \to \infty} \tau(a^{1/n}).
\]
We prove that every strictly positive lower semicontinuous affine function occurs in this manner by giving an approximate answer to the same question for {\it recursive subhomogeneous C$^*$-algebras}, the building blocks of ASH algebras.  This, in turn, requires proving that the homotopy groups of certain rank-constrained sets of positive operators in $\mathrm{C}(X) \otimes \mathcal{K}$ vanish in low dimensions (Section \ref{rankhom}).   The proofs of Theorem \ref{waz} and Corollaries \ref{zstabsdg} and \ref{class} are contained in Section \ref{main}.

  \vspace{3mm}
  \noindent
  {\it Acknowledgements.}   We thank Wilhelm Winter for several valuable conversations on the topic of this article had during a visit to the University of Nottingham in June of 2009. 
  
   \section{Rank-constrained homotopies}\label{rankhom} 
The main result of this Section is Proposition \ref{envelopeextend}.  It allows one to extend a positive element in a matrix algebra over a closed subset $Y$ of a compact metric space $X$ to all of $X$ subject to a pair of rank bounds given by a lower and an upper semicontinuous $\mathbb{Z}$-valued function on $X$. 

\begin{lms}\label{ranketa}
Let $X$ be a compact metric space, and let $a \in \mathrm{M}_n(\mathrm{C}(X))$ be positive.  Let $g:X \to \mathbb{Z}^+$ be upper semicontinuous, and suppose that 
\[
\mathrm{rank}(a(x)) \geq g(x), \ \forall x \in X.
\]
It follows that for some $\eta>0$, for each $x \in X$, the spectral projection $\chi_{(\eta,\infty]}(a(x))$ has rank at least $g(x)$. 
\end{lms}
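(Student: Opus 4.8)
The plan is to establish this by a compactness argument, using the upper semicontinuity of $g$ to reduce matters to finitely many closed subsets of $X$ on which the number of ``large'' eigenvalues of $a$ can be bounded below uniformly. First I would note that, since $X$ is compact and $g\colon X\to\mathbb{Z}^+$ is upper semicontinuous, $g$ attains a finite maximum $M$, and for each integer $m$ with $1\le m\le M$ the superlevel set $Y_m:=\{x\in X:g(x)\ge m\}$ is closed, hence compact. This is precisely the point where upper semicontinuity of $g$ is used, and by hypothesis $\mathrm{rank}(a(x))\ge g(x)\ge m$ for every $x\in Y_m$.

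Next, for $1\le m\le n$ let $\lambda_m\colon X\to[0,\infty)$ be the function sending $x$ to the $m$-th largest eigenvalue of $a(x)$, counted with multiplicity. It is classical that the ordered eigenvalues of a norm-continuous family of self-adjoint matrices vary continuously with the parameter, so each $\lambda_m$ is continuous. The inequality $\mathrm{rank}(a(x))\ge m$ is equivalent to $\lambda_m(x)>0$; thus $\lambda_m$ is strictly positive on the compact set $Y_m$, and $\eta_m:=\min_{x\in Y_m}\lambda_m(x)>0$ whenever $Y_m\ne\emptyset$. I would then set $\eta:=\tfrac12\min\{\eta_m:1\le m\le M,\ Y_m\ne\emptyset\}>0$. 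Given any $x\in X$, put $k:=g(x)\ge 1$; since $x\in Y_k$ we get $\lambda_k(x)\ge\eta_k>\eta$, so $a(x)$ has at least $k$ eigenvalues in $(\eta,\infty)$, which is to say that $\chi_{(\eta,\infty]}(a(x))$ has rank at least $k=g(x)$.

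If one prefers to avoid citing continuity of the ordered eigenvalues, the same conclusion follows by a local covering argument: for each $x_0\in X$ choose $\eta_{x_0}>0$ with $\mathrm{rank}\,\chi_{(\eta_{x_0},\infty]}(a(x_0))\ge g(x_0)$, fix a continuous $f\colon[0,\infty)\to[0,1]$ vanishing on $[0,\eta_{x_0}/2]$ and equal to $1$ on $[\eta_{x_0},\infty)$, and use that $x\mapsto\mathrm{Tr}(f(a(x)))$ is continuous and satisfies $\mathrm{Tr}(f(a(x)))\le\mathrm{rank}\,\chi_{(\eta_{x_0}/2,\infty]}(a(x))$. Combining the resulting estimate near $x_0$ with a neighbourhood on which $g(x)\le g(x_0)$ (available by upper semicontinuity), and then passing to a finite subcover of $X$, yields a single $\eta>0$ that works.

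I do not expect a serious obstacle here: the result is essentially a bookkeeping exercise combining two compactness/semicontinuity inputs. The only points that require care are matching the direction of semicontinuity of $g$ with the closedness (hence compactness) of the sets $Y_m$, so that the minima of the $\lambda_m$ over them are attained and strictly positive, and either invoking continuity of the ordered eigenvalue functions or replacing it by the continuity of $x\mapsto\mathrm{Tr}(f(a(x)))$ for $f$ a continuous functional calculus cutoff.
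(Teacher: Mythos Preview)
Your proof is correct. The primary argument you give differs in organization from the paper's, while your alternative sketch in the third paragraph is essentially the paper's argument.

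The paper proceeds by an open-cover argument: at each $x$ it takes $\eta_x$ to be half the smallest nonzero eigenvalue of $a(x)$, finds a neighbourhood on which $\mathrm{rank}\,\chi_{(\eta_x,\infty]}(a(\cdot))\ge g(x)$ (using lower semicontinuity of rank), intersects with a neighbourhood on which $g(\cdot)\le g(x)$ (using upper semicontinuity of $g$), passes to a finite subcover, and takes the minimum of the nonzero $\eta_{x_i}$. Your main argument instead stratifies $X$ by the closed superlevel sets $Y_m=\{g\ge m\}$ and invokes continuity of the ordered eigenvalue functions $\lambda_m$ to get a positive minimum on each compact $Y_m$. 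This exploits the integer-valuedness of $g$ more directly (there are only finitely many strata) and avoids an explicit open cover; the paper's route is slightly more generic in that it only uses lower semicontinuity of rank rather than full continuity of the $\lambda_m$. Both are short and either is adequate here. One cosmetic point: you should note that the case $g(x)=0$ is trivial, so the restriction to $k=g(x)\ge 1$ in your last step is harmless.
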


\begin{proof}
For each $x \in X$, let $\eta_x \geq 0$ be half of the smallest nonzero eigenvalue of $a(x)$, if it exists, and zero otherwise.  
The map $x \mapsto \mathrm{rank}(a(x))$ is lower semicontinuous, 
so there is an open neighbourhood $V_x$ of $x$ with the property that 
\[
\mathrm{rank}[\chi_{(\eta_x,\infty]}(a(y))] \geq g(x), \ \forall y \in V_x.
\]
Since $g$ is upper semicontinuous and $\mathbb{Z}$-valued, there is an open neighbourhood $W_x$ of $x$ such that $g(y) \leq g(x)$ for each $y \in W_x$.  Set $U_x = V_x \cap W_x$.
Now
\begin{equation}\label{etax}
\mathrm{rank}[\chi_{(\eta_x,\infty]}(a(y))] \geq g(x) \geq g(y), \ \forall y \in V_x.
\end{equation}
Since $\{V_x \ | \ x \in X\}$ is an open cover of $X$, it admits a finite subcover $V_{x_1} \cup \cdots \cup V_{x_n}$.  Let $\eta$ be the minimum of the
nonzero $\eta_{x_i}$s.  Now $\mathrm{rank}[\chi_{(\eta,\infty]}(a(x))] \geq g(x)$ on each $V_{x_i}$ such that $\eta_{x_i} > 0$ by (\ref{etax}), and the 
same inequality holds on the remaining $V_{x_i}$ since $g$ is identically zero on these sets.
\end{proof}

From here on we use $\mathrm{dim}(X)$ to denote the covering dimension of a compact Hausdorff space $X$.  We also recall that a projection-valued map $\phi:X \to \mathrm{M}_n$ is said to be lower semicontinuous (resp. upper semicontinuous) if the map $x \mapsto \langle \phi(x) \xi, \xi \rangle$ is lower semicontinuous (resp. upper semicontinuous) for every $\xi \in \mathbb{C}^n$.

\begin{lms}\label{selecttriv}
Let $X$ be a compact Hausdorff space for which $\mathrm{dim}(X)<\infty$, and let $a \in \mathrm{M}_n(\mathrm{C}(X))$ be positive.  Suppose that 
\[
\mathrm{rank}(a(x)) \geq k, \ \forall x \in X.
\]
It follows that there is a homotopy $h:[0,1] \to \mathrm{M}_n(C(X))_+$ with the following properties:
\begin{enumerate}
\item[(i)] $h(0) = a$;
\item[(ii)] $\mathrm{rank}(h(t)(x)) = \mathrm{rank}(h(0)(x)) = \mathrm{rank}(a(x)), \ \forall x \in X$;
\item[(iii)] there is a trivial projection $p \in \mathrm{M}_n(\mathrm{C}(X))$ of rank at least $k - \mathrm{dim}(X)$ which is a direct summand of $h(1)$.
\end{enumerate}
\end{lms}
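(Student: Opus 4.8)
The plan is to build $h$ in two stages: first a functional‑calculus homotopy pushing the ``large'' part of the spectrum of $a$ up to the eigenvalue $1$, and then a splitting‑off of a trivial projection lying inside the resulting $1$-eigenspace. We may assume $\|a\|\le 1$, since rescaling $a$ is a rank‑preserving homotopy; and if $k\le\dim(X)$ there is nothing to prove (take $p=0$ and $h$ constant), so assume $k\ge\dim(X)+1$.

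\emph{Step 1: standardising the top of the spectrum.} Apply Lemma~\ref{ranketa} with $g\equiv k$ to obtain $\eta>0$ with $\mathrm{rank}\,\chi_{(\eta,\infty]}(a(x))\ge k$ for every $x\in X$. Fix a continuous $f\colon[0,1]\to[0,1]$ with $f(0)=0$, $f$ strictly increasing on $[0,\eta]$, and $f\equiv 1$ on $[\eta,1]$, and put $b:=f(a)$. The straight‑line path $t\mapsto\big((1-t)\mathrm{id}+tf\big)(a)$ joins $a$ to $b$ through positive elements, and because each function $(1-t)\mathrm{id}+tf$ on $[0,1]$ vanishes only at $0$, this path preserves $\mathrm{rank}(\,\cdot\,(x))$ for every $x$ and every $t$; this will be the first leg of $h$. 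Its endpoint $b$ satisfies, for each $x$, $\dim E_x\ge\mathrm{rank}\,\chi_{(\eta,\infty]}(a(x))\ge k$, where $E_x:=\ker(1-b(x))$ is the $1$-eigenspace of $b(x)$.

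\emph{Step 2: splitting off a trivial summand.} It suffices to produce a trivial projection $p\in\mathrm{M}_n(\mathrm{C}(X))$ of rank $k-\dim(X)$ with $p(x)\mathbb{C}^n\subseteq E_x$ for all $x$: since $b(x)$ acts as the identity on $E_x$ we then get $b(x)p(x)=p(x)b(x)=p(x)$, hence $b=p\oplus(1-p)b(1-p)$ with $(1-p)b(1-p)\ge 0$ living in the trivial corner $(1-p)\mathrm{M}_n(\mathrm{C}(X))(1-p)$, so $p$ is a trivial direct summand of $h(1):=b$; combined with Step~1 this gives (i)--(iii). To obtain $p$, I will construct $k-\dim(X)$ pointwise‑linearly‑independent sections $\xi_1,\dots,\xi_{k-\dim(X)}\in\mathrm{C}(X)^n$ with $\xi_i(x)\in E_x$ for all $x$; then $p(x):=$ orthogonal projection onto $\mathrm{span}\{\xi_1(x),\dots,\xi_{k-\dim(X)}(x)\}$ depends continuously on $x$ (Gram--Schmidt) and is trivialised globally by the frame $(\xi_i)$. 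The sections are built one at a time: having chosen $\xi_1,\dots,\xi_j$ with $j\le k-\dim(X)-1$, the set of admissible choices for $\xi_{j+1}(x)$ is the unit sphere of $E_x\cap\{\xi_1(x),\dots,\xi_j(x)\}^{\perp}$, of real dimension at least $2(k-j)-1\ge 2\dim(X)+1$; since a sphere of this dimension is $(\dim(X)-1)$-connected, the standard obstruction‑theoretic argument over the $\dim(X)$-dimensional space $X$ extends a choice of $\xi_{j+1}$ over all of $X$, the obstructions to passing from one skeleton of $X$ to the next lying in groups $H^{i}\big(X;\pi_{i-1}(\mathrm{fibre})\big)$ with $i\le\dim(X)$, all of which vanish. (In fact this yields $p$ of rank up to roughly $k-\tfrac12\dim(X)$, but $k-\dim(X)$ is all we need.)

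\emph{The main obstacle.} The genuinely delicate point is Step~2: the assignment $x\mapsto E_x$ is only upper semicontinuous in dimension and is not a locally trivial fibration, so obstruction theory cannot be quoted directly. I would deal with this by first extracting honest local trivialisations of a large sub‑family: for each $x_0$, since $a(x_0)$ has finite spectrum one can pick $\delta_{x_0}\in(\eta,\infty)$ with $\mathrm{rank}\,\chi_{(\delta_{x_0},\infty]}(a(x_0))\ge k$ together with a spectral gap of $a(x_0)$ at $\delta_{x_0}$; by continuity this gap persists on a neighbourhood $U_{x_0}$, so $\chi_{(\delta_{x_0},\infty]}(a(\cdot))$ is a \emph{continuous} projection on $U_{x_0}$, of locally constant rank $\ge k$, whose range is contained in $E_x$. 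Thus the family $\{E_x\}$ contains an honest rank‑$\ge k$ complex vector bundle in a neighbourhood of every point, and one can run the cellular induction either on the nerve of a fine finite cover refining $\{U_{x_0}\}$ (whose dimension is at most $\dim(X)$) and pull the sections back --- using that a compact metric space of finite covering dimension is an inverse limit of such polyhedra --- or by a direct general‑position argument, the ``bad'' locus $\{(x,[v])\in X\times\mathbb{CP}^{n-1}:v\notin E_x\}$ being of codimension large enough to make sections exist and be unobstructed. Everything else (the rescaling, the extraction of $\eta$ from Lemma~\ref{ranketa}, the functional calculus of Step~1, and the assembly of these pieces into a single homotopy $h$ with (i)--(iii)) is routine.
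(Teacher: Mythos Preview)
Your approach is the paper's: both apply Lemma~\ref{ranketa}, run a functional-calculus homotopy flattening the spectrum of $a$ above $\eta$ to the eigenvalue $1$, and then split off a trivial projection from the resulting $1$-eigenspace. Where you wrestle with the upper-semicontinuity of $\{E_x\}$ via local spectral gaps and hand-built sections, the paper shortcuts by noting that $x\mapsto\chi_{(\eta,\infty]}(a(x))$ is a \emph{lower} semicontinuous projection-valued map of rank $\ge k$ lying beneath $h(1)=f_{\eta/2}(a)$, so that \cite[Proposition~3.2]{DNNP} immediately yields a continuous subprojection $q$ of rank $\ge k-\lfloor(d-1)/2\rfloor$, and then the standard trivial-summand fact for vector bundles gives $p\le q$ of rank $\ge k-d$; your ``main obstacle'' paragraph is in effect a sketch of how one proves that DNNP selection theorem, so the argument is correct but less economical.
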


\begin{proof}
Let $a$ be given.  We may assume that $\|a\| \leq 1$.  Use Lemma \ref{ranketa} to find $\eta>0$ such that the rank of $\chi_{(\eta,\infty]}(a(x))$ is at least $k$ for 
each $x \in X$.  For each $s \in (0,1]$ define
a continuous map $f_s:[0,1] \to [0,1]$ by insisting that $f_s$ is identically equal to one on $[s,1]$, that $f_s(0)=0$, and that $f_s$ is linear elsewhere.  Let $s = 1- t(1-\eta/2)$, and define $h(t)(x) = f_{s}(a(x))$.  This is clearly a homotopy.  When $t=0$, $s=1$, so $f_1(a(x)) = a(x)$ and so $h(0)=a$ as required by (i).  Since the support of $f_s$ is 
$(0,1]$, we have that $\mathrm{rank}(f_s(a(x)))=\mathrm{rank}(a(x))$ for each $x \in X$ and $s \in (0,1]$, establishing (ii).  

To prove (iii), first note that $x \mapsto \chi_{(\eta,\infty]}(a(x))$ is a lower semicontinuous 
projection-valued map having rank at least $k$ at each $x \in X$.  Since
$\chi_{(\eta,\infty]}(a(x)) \leq f_{\eta/2}(a(x))$ for each $x$, we see by functional calculus that 
$\chi_{(\eta,\infty]}(a(x))$ is a direct summand of $h(1)(x)$ for each $x \in X$. 
It follows from Proposition 3.2 of \cite{DNNP} that there is a continuous projection-valued map $q:X \to \mathrm{M}_n$ which 
is pointwise a direct summand of $\chi_{(\eta,\infty]}(a)$ and satisfies $\mathrm{rank}(q) \geq k - \lfloor (\mathrm{dim}(X)-1)/2 \rfloor$.  
It is well known that such a $q$ admits a direct summand $p$ which corresponds to a trivial vector bundle 
and satisfies $\mathrm{rank}(p) \geq \mathrm{rank}(q) - \lfloor (\mathrm{dim}(X)-1)/2 \rfloor$.  Note that $p(x)$ is a direct summand of $q(x)$, that $q(x)$ is a direct summand of $\chi_{(\eta,\infty]}(a(x))$, and, as noted above, that $\chi_{(\eta,\infty]}(a(x))$ is a direct summand $h(1)(x)$;  it follows that $p$ is a direct summand of $h(1)$ as required.  The preceding rank inequality between $p$ and $q$ entails that 
$\mathrm{rank}(p) \geq k-\mathrm{dim}(X)$.
\end{proof}

\begin{dfs}[ Definition 3.4 (iii), \cite{To3}]\label{wellsupported}
Let $X$ be a compact Hausdorff space and let $a \in \mathrm{M}_n(\mathrm{C}(X))$ be positive.  Let $n_1 <  n_2 < \ldots  < n_k$ be the rank values taken by $a$ on $X$, and set 
\[
E_i = \{x \in X \ | \ \mathrm{rank}(a(x)) = n_i \}.
\]
We say that $a$ is well supported if there are constant rank projections $p_i \in \mathrm{M}_n(\mathrm{C}(\overline{E_i}))$ with the following properties:
\begin{itemize}
\item[$\bullet$] $p_i(x) \leq p_j(x)$ whenever $i \leq j$ and $ x \in \overline{E_i} \cap \overline{E_j}$;
\item[$\bullet$] $p_i(x) = \lim_{n \to \infty} (a(x))^{1/n}$ for each $x \in E_i$.
\end{itemize}
\end{dfs}

In the next Lemma and elsewhere, we use ``$\precsim$'' to denote the Cuntz relation on the positive elements of a C$^*$-algebra (see \cite{ET}, for instance). 

\begin{lms}\label{rankincrease}
Let $X$ be a compact Hausdorff space, and let $a \in \mathrm{M}_n(\mathrm{C}(X))$ be positive.  Suppose that 
\[
l \leq \mathrm{rank}(a(x)) \leq k, \ \forall x \in X.
\]
for $l,k \in \mathbb{N}$ satisfying $k \leq n$, $l \leq \mathrm{dim}(X)$ and $4 \mathrm{dim}(X) \leq k-l$.
It follows that there is a homotopy $h:[0,1] \to \mathrm{M}_n(C(X))_+$ with the following properties:
\begin{enumerate}
\item[(i)] $h(0) = a$;
\item[(ii)] $l \leq \mathrm{rank}(h(t)(x)) \leq k, \ \forall x \in X, t \in [0,1]$;
\item[(iii)] $\mathrm{rank}(h(1)(x)) \geq l+\mathrm{dim}(X), \ \forall x \in X$.
\end{enumerate}
\end{lms}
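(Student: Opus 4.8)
The plan is to produce the homotopy in the form $h(t) = a + tb$ for a carefully chosen positive element $b \in \mathrm{M}_n(\mathrm{C}(X))$ with $ab = 0$. Write $d := \mathrm{dim}(X)$. Suppose we can find such a $b$ satisfying
\[
l + d - \mathrm{rank}(a(x)) \;\leq\; \mathrm{rank}(b(x)) \;\leq\; k - \mathrm{rank}(a(x)), \qquad \forall x \in X,
\]
the left-hand inequality imposing a constraint only where $\mathrm{rank}(a(x)) < l + d$. Then $h$ is a homotopy through positive elements with $h(0) = a$, which is (i); exact orthogonality gives $\mathrm{rank}(h(t)(x)) = \mathrm{rank}(a(x)) + \mathrm{rank}(b(x))$ for all $t \in (0,1]$, so that together with $l \le \mathrm{rank}(a(x)) \le k$ and the displayed bounds one gets $l \le \mathrm{rank}(h(t)(x)) \le k$ for every $t$, which is (ii); and at $t = 1$ the left bound forces $\mathrm{rank}(h(1)(x)) \ge l + d$, which is (iii). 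Continuity of $h$ in $t$ is immediate. Thus the whole Lemma reduces to the construction of $b$.

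For that construction, the range of $b(x)$ must lie inside $\ker(a(x))$, a subspace of dimension $n - \mathrm{rank}(a(x))$; on the closed set $F := \{x : \mathrm{rank}(a(x)) \le l + d - 1\}$ (closed because $x \mapsto \mathrm{rank}(a(x))$ is lower semicontinuous), where $b$ is forced to be nonzero, this dimension is at least $n - (l+d-1) \ge 3d+1$ by the hypotheses $n \ge k$ and $k - l \ge 4d$, so there is ample room pointwise. The difficulty is that $\mathrm{rank}(b)$ is necessarily lower semicontinuous while the kernel bundle $x \mapsto \ker(a(x))$ is upper semicontinuous, so one cannot simply select a sub-bundle. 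Instead I would fix $\eta>0$ via Lemma \ref{ranketa} (with $g \equiv l$) and work with the approximate kernel $x \mapsto 1 - \chi_{(\eta,\infty]}(a(x))$. Over the closed sets cut out by the rank-stratification of $a$ (it may help to first homotope $a$, preserving all ranks, to a well-supported element in the sense of Definition \ref{wellsupported}, and then work over the sets $\overline{E_i}$), use Proposition 3.2 of \cite{DNNP} to extract continuous trivial sub-projections of this approximate kernel, with the rank dropping by at most $\lfloor (d-1)/2\rfloor$ relative to the available kernel rank, exactly as in the proof of Lemma \ref{selecttriv}. Patch these across stratum boundaries into a single positive $b_0$ whose rank exceeds $l + d - \mathrm{rank}(a(\cdot))$ throughout and which vanishes on $\{\mathrm{rank}(a) \ge l + d\}$, capping the local ranks so that $\mathrm{rank}(a) + \mathrm{rank}(b_0) \le k$ everywhere; this is where $4d \le k - l$ and $l \le d$ supply precisely the needed budget. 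Finally, compress $b_0$ into the genuine kernel of $a$ to obtain $b$ with $ab = 0$, a perturbation small enough to leave the rank estimates intact.

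The main obstacle is exactly this construction of $b$: one must pin $\mathrm{rank}(b)$ above a prescribed upper-semicontinuous function (so $b$ cannot vanish on $F$) and below a prescribed lower-semicontinuous one, using a continuous $b$ whose own rank is lower semicontinuous — a semicontinuity mismatch which, on a connected component, cannot be met by any single constant-rank choice and so must be resolved stratum by stratum. The accounting across strata is delicate, and this is why one needs the slack $4\,\mathrm{dim}(X) \le k - l$ rather than the naive $\mathrm{dim}(X) \le k - l$: extracting a continuous sub-projection of the approximate kernel, trivializing it, tapering it across a stratum boundary, and correcting for exact orthogonality can each cost on the order of $\mathrm{dim}(X)$ in rank.
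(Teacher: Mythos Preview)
Your reduction to finding $b$ with $ab=0$ and the displayed rank bounds is where the argument breaks.  The upper bound $\mathrm{rank}(b(x)) \le k-\mathrm{rank}(a(x))$ forces $b(x)=0$ at every point where $\mathrm{rank}(a(x))=k$, while the lower bound forces $\mathrm{rank}(b(x))\ge 1$ wherever $\mathrm{rank}(a(x))<l+d$.  These two regions can abut: take $X=[0,1]$, $d=1$, $l=1$, $k=n=5$, and $a(x)=\mathrm{diag}(1,x,x,x,x)$.  Then $b$ must vanish on $(0,1]$, hence at $0$ by continuity, yet you require $\mathrm{rank}(b(0))\ge 1$.  No amount of stratification, tapering, or ``compressing into the genuine kernel'' repairs this, because the obstruction is set-theoretic: a continuous $b$ orthogonal to $a$ simply does not exist with those rank constraints.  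The last paragraph of your proposal correctly identifies the semicontinuity mismatch but then tries to absorb it into the $4d$ budget; the example shows the deficit is not a matter of budget at all.

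The paper's proof avoids this by \emph{not} insisting on orthogonality.  After reducing to well-supported $a$, it splits on whether the set $\{x:\mathrm{rank}(a(x))>l+2d\}$ is empty.  If it is, then the support of $a$ has rank at most $l+2d$ everywhere and one can (via \cite{BE}) find a projection $p$ of rank $\ge d$ orthogonal to that support---this is essentially your idea, and it works here because there is no high-rank region.  If the high-rank set is nonempty, the paper instead uses \cite{DNNP} to produce a projection $q$ of rank exactly $l+d$ that is \emph{subordinate} to the support of $a$ on the high-rank strata (so $q(x)\le p_i(x)$ there).  Then $h(t)=a+tq$ satisfies $\mathrm{rank}(h(t)(x))=\mathrm{rank}(a(x))$ on those strata, while on the low-rank strata one has the crude bound $\mathrm{rank}(h(t)(x))\le (l+2d)+(l+d)=2l+3d\le l+4d\le k$, using $l\le d$.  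The key move you are missing is to let the added piece live \emph{inside} the support of $a$ where the rank is already large, rather than orthogonal to it.
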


\begin{proof}
Use Lemma \ref{ranketa} to find $\eta>0$ such that $\chi_{(\eta,\infty]}(a(x))$ has rank at least $l$ at each $x \in X$.  
By \cite[Theorem 2.3]{To4}, there is a well supported positive element $b$ of $\mathrm{M}_n(\mathrm{C}(X))$ such that $b \leq a$ and $\| 
b-a \| < \eta$.  Set $a_t = (1-t)a + t b$.  Since $a_t \leq a$, we have 
\[
\mathrm{rank}(a_t(x)) \leq \mathrm{rank}(a(x)) \leq k, \ \forall x \in X.
\]
On the other hand, we have $\|a_t -a \| < \eta$, whence $(a-\eta)_+ \precsim a_t$ for each $t$.  Now by our choice of $\eta$, we have
\[
l \leq \mathrm{rank}((a-\eta)_+(x)) \leq \mathrm{rank}(a_t(x)), \ \forall x \in X.
\]
We therefore have the bounds required by part (ii) of the conclusion of the Lemma for the homotopy $a_t$.  It follows that we may simply assume that the element $a$ of the Lemma is well supported from the outset.  Let $n_1 < n_2 < \cdots < n_k$, $E_1, E_2,\ldots,E_k$, and $p_1, p_2, \ldots, p_k$ be as in Definition \ref{wellsupported}.  To complete the proof of the Lemma, we treat two cases.

\vspace{2mm}
\noindent
{\bf Case I.}  Here we assume that $S = \{x \in X \ | \ \mathrm{rank}(a(x)) > l+2\mathrm{dim}(X) \}$ is empty.  The upper semicontinuous projection-valued map $\phi: X \to \mathrm{M}_n$ given by 
\[
\phi(x) = \bigvee_{i=1}^k p_i(x)
\]
therefore has rank less than or equal to $l+ 2\mathrm{dim}(X)$ everywhere, and it follows from \cite[Theorem 3.1]{BE} that there is a projection $p \in \mathrm{M}_n(\mathrm{C}(X))$ which is orthogonal to the image of $\phi$ and satisfies $\mathrm{rank}(p(x)) \geq \mathrm{dim}(X)$.  It is now easy to check that the homotopy $h(t) = a + tp$ satisfies (i)--(iii) in the conclusion of the Lemma.

\vspace{2mm}
\noindent
{\bf Case II.}  Assume that $S = \{x \in X \ | \ \mathrm{rank}(a(x)) > l+2\mathrm{dim}(X) \} \neq \emptyset$.  Let $r$ be the smallest index for which $n_r > l+ 2\mathrm{dim}(X)$, and set $Y = \cup_{i \geq r} \overline{E_i}$.  The lower semicontinuous projection-valued map $\psi:X \to \mathrm{M}_n$ given by
\[
\psi(x) = \left \{ \begin{array}{ll} \bigwedge_{\{i \ | \ i \geq r, x \in \overline{E_i} \}} p_i(x), & x \in Y \\ \mathbf{1}_{\mathrm{M}_n}, & x \in X \backslash Y \end{array} \right.
\]
therefore has rank strictly greater than $l+2\mathrm{dim}(X)$ everywhere, and it follows from \cite[Proposition 3.2]{DNNP} that there is a projection $q \in \mathrm{M}_n(\mathrm{C}(X))$ satisfying $\mathrm{rank}(q) = l+\mathrm{dim}(X)$ and $q(x) \leq p_i(x)$ whenever $x \in E_i$ and
$i \geq r$.  Set $h(t) = a + tq$.  

If $x \in E_i$ and $i \geq r$, then 
\[
h(t)(x) = a(x) + tq(x) \leq 2a(x) \precsim a(x),
\]
and so $\mathrm{rank}(h(t)(x)) \leq \mathrm{rank}(a(x)) \leq k$.  
If $i < r$, then 
\[
\mathrm{rank}(h(t)(x)) \leq \mathrm{rank}(a(x)) + \mathrm{rank}(q(x)) \leq 2l+ 3\mathrm{dim}(X) \leq l+4\mathrm{dim}(X)=k.
\]
If $t=1$, then 
\[
h(t)(x) = a(x) + q(x) \geq q(x),
\]
whence $\mathrm{rank}(h(t)(x)) \geq l+\mathrm{dim}(X)$.  This completes the proof.

\end{proof}

\begin{props}\label{homandrank}
Let $X$ be a compact Hausdorff space for which $\mathrm{dim}(X)<\infty$, and let $k,l,n \in \mathbb{N}$ satisfy $k \leq n$ and $4\mathrm{dim}(X) \leq k-l$.  
It follows that the set 
\[
S = \{ a \in \mathrm{M}_n(\mathrm{C}(X))_+ \ | \  l \leq \mathrm{rank}(a(x)) \leq k, \ \forall x \in X \}
\]
is path connected.
\end{props}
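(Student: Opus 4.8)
\noindent\emph{Proof strategy.}
Write $d=\mathrm{dim}(X)$. The plan is to join every $a\in S$, by a path inside $S$, to the single constant projection $e\in\mathrm{M}_n(\mathrm{C}(X))$ of rank $l$ supported on the first $l$ diagonal entries (with $e=0$ when $l=0$); since $l\le k\le n$ this $e$ lies in $S$, and path connectedness follows once every point of $S$ is joined to $e$. The case $l=0$ is immediate, as $t\mapsto(a-t\|a\|)_{+}$ runs inside $S$ from $a$ to $0$ --- its rank at each $x$ never exceeds $\mathrm{rank}(a(x))\le k$. So I assume $l\ge 1$ and split into the cases $l\le d$ and $l>d$.

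I would first treat $l\le d$. Here the hypotheses of Lemma~\ref{rankincrease} are met, so after a homotopy inside $S$ one may assume $\mathrm{rank}(a(x))\ge l+d$ for all $x$. Lemma~\ref{selecttriv}, applied with its parameter ``$k$'' taken to be $l+d$, then supplies a rank-preserving homotopy (so one staying in $[l+d,k]\subseteq S$) after which $a$ has a trivial projection summand of rank at least $(l+d)-d=l$; shrinking this to a trivial subprojection of rank exactly $l$, write $a=p\oplus c$ with $cp=0$ and $c\ge 0$, so that $\mathrm{rank}(c(x))=\mathrm{rank}(a(x))-l\in[d,k-l]$. Now $t\mapsto p\oplus(c-t\|c\|)_{+}$ joins $a$ to $p$ inside $S$, its rank at $x$ being $l+\mathrm{rank}\big((c-t\|c\|)_{+}(x)\big)\in[l,\mathrm{rank}(a(x))]\subseteq[l,k]$. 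Finally, since $p$ is a trivial rank-$l$ projection I may write $p=vv^{*}$ for a continuous map $v\colon X\to V_{l}(\mathbb{C}^{n})$ into the complex Stiefel manifold of $l$-frames with $v^{*}v=\mathbf{1}_{l}$; as $V_{l}(\mathbb{C}^{n})$ is $2(n-l)$-connected and $d\le 2(n-l)$ (because $n-l\ge k-l\ge 4d$), $v$ is homotopic through $l$-frames to a constant frame $v_{0}$ with $v_{0}v_{0}^{*}=e$, and $s\mapsto v_{s}v_{s}^{*}$ is a path of rank-$l$ projections in $S$ from $p$ to $e$.

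Then I would reduce the case $l>d$ (so $l-d\ge 1$) to the previous one. Lemma~\ref{selecttriv} with ``$k$''$=l$ lets me assume, after a rank-preserving homotopy inside $S$, that $a$ has a trivial summand $p_{1}$ of rank $l-d$. Since $p_{1}$ is trivial and $d\le 2(n-l+d)$, it is homotopic through projections to the constant rank-$(l-d)$ projection $e_{1}$, and lifting that homotopy to a path of unitaries $u_{s}$ with $u_{0}=\mathbf{1}$ and $u_{1}p_{1}u_{1}^{*}=e_{1}$, I conjugate $a$ along $u_{s}$ --- this stays in $S$ since unitary conjugation preserves rank --- to replace $a$ by $e_{1}\oplus c_{1}$, where $c_{1}\ge 0$ is orthogonal to $e_{1}$ and so lies in the \emph{constant} corner $(\mathbf{1}-e_{1})\mathrm{M}_{n}(\mathrm{C}(X))(\mathbf{1}-e_{1})\cong\mathrm{M}_{n-l+d}(\mathrm{C}(X))$, with $\mathrm{rank}(c_{1}(x))=\mathrm{rank}(a(x))-(l-d)\in[d,k-l+d]$. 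The triple $(n-l+d,\,k-l+d,\,d)$ satisfies the hypotheses of the Proposition with lower bound $d$, so by the cases already settled $c_{1}$ is joined within the corresponding set in $\mathrm{M}_{n-l+d}(\mathrm{C}(X))$ to the constant rank-$d$ projection; adjoining $e_{1}$ produces a path in $S$ from $e_{1}\oplus c_{1}$ to a constant rank-$l$ projection, which may be taken to be $e$ (any two constant rank-$l$ projections are conjugate by a constant unitary). Concatenating the pieces gives the path from $a$ to $e$.

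The step I expect to be the real obstacle is coping with the fact that the trivial projections produced by Lemma~\ref{selecttriv} need not be constant: before the complementary part can be fed back into Lemmas~\ref{rankincrease} and~\ref{selecttriv}, which are stated over $\mathrm{M}_{m}(\mathrm{C}(X))$, one must straighten such a projection to a constant by a unitary, and this is precisely where the slack $4\,\mathrm{dim}(X)\le k-l$ is consumed --- via $n-l\ge 4\,\mathrm{dim}(X)$ --- to make the relevant Stiefel manifold $\mathrm{dim}(X)$-connected. Verifying that each homotopy above keeps the rank inside $[l,k]$ is otherwise routine, since at every stage one only adjoins an orthogonal projection of controlled rank or shrinks the support of a positive element.
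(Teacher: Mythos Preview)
Your argument is correct and follows essentially the same route as the paper: split into the cases $l\le\mathrm{dim}(X)$ and $l>\mathrm{dim}(X)$, use Lemma~\ref{rankincrease} (directly or in a corner) to push the rank up to $l+\mathrm{dim}(X)$, then use Lemma~\ref{selecttriv} to split off a trivial rank-$l$ projection, scale the complement to zero, and connect the remaining trivial rank-$l$ projections. The only cosmetic differences are that you connect everything to a fixed constant projection $e$ rather than connecting two arbitrary elements directly, and you invoke Stiefel-manifold connectivity explicitly (and straighten the trivial summand to a constant by a unitary path) where the paper simply cites stable-rank considerations and uses the abstract isomorphism $(1-p)\mathrm{M}_n(\mathrm{C}(X))(1-p)\cong\mathrm{M}_{n-\mathrm{rank}(p)}(\mathrm{C}(X))$.
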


\begin{proof}
Let $a,b \in S$.
If $l \leq \mathrm{dim}(X)$, then by Lemma \ref{rankincrease} we may assume that 
\[
\mathrm{rank}(a(x)) \geq l+ \mathrm{dim}(X)
\]
for each $x \in X$.  If $l > \mathrm{dim}(X)$, then use Lemma \ref{selecttriv} to see that $a$ is homotopic inside $S$ to $a_1 = a_2 \oplus p$, where $p$ is a trivial projection of rank $l-\mathrm{dim}(X)$ and $a_1$ is positive.  Now use Lemma \ref{rankincrease} to find a homotopy 
\[
h(t) \in (1-p)(\mathrm{M}_n(\mathrm{C}(X))(1-p) \cong \mathrm{M}_{n-\mathrm{rank}(p)}(\mathrm{C}(X))
\]
between $a_2$ and $a_3:=h(1)$.  (Note that $1-p$ corresponds to a trivial vector bundle because it has the correct $\mathrm{K}_0$-class and satisfies $\mathrm{rank}(1-p) \geq \mathrm{dim}(X)/2$.)  It follows that $g(t) = h(t) \oplus p$ is a homotopy in $S$, and that $a_4 := g(1)$ satisfies
\[
l+ \mathrm{dim}(X) \leq \mathrm{rank}(a_4(x)) \leq k, \ \forall x \in X.
\]
Thus, we may assume that $a$ and $b$ satisfy
\[
l+\mathrm{dim}(X) \leq \mathrm{rank}(a(x)), \ \mathrm{rank}(b(x)) \leq k, \ \forall x \in X.
\]

Use Lemma \ref{selecttriv} again to see that $a$ is homotopic inside $S$ to $a_5 = a_6 \oplus q$, where $a_5$ is positive and $q$ is a trivial projection of rank $l$.  The upshot of these observations is that we may assume from the outset that
\[
a = \tilde{a} \oplus q \ \ \mathrm{and} \ \ b = \tilde{b} \oplus q^{'},
\]
where $q$ and $q^{'}$ are trivial projections of rank $l$.   From stable rank considerations there is a path $u(t)$ of unitaries in $\mathrm{M}_n$ such that $u(0)qu(0)^* = q$ and $u(1)qu(1)^* = q^{'}$.  We may therefore assume further that $q = q^{'}$.  Define a homotopy $t \mapsto a_t$ in 
\[
(1-q)(\mathrm{M}_n(\mathrm{C}(X))(1-q) \cong \mathrm{M}_{n-\mathrm{rank}(q)}(\mathrm{C}(X))
\]
by the following formula:
\[
a_t = \left\{ \begin{array}{ll} (1-2t)\tilde{a}, & t \in [0,1/2] \\ (2t-1)\tilde{b}, & t \in (0,1/2] \end{array} \right. .
\]
It is clear that $\mathrm{rank}(a_t(x)) \leq k-l$, whence $t \mapsto a_t \oplus q$ is a path in $S$ connecting $a$ and $b$, as desired.
\end{proof}

\begin{rems} {\rm  A continuous map $f: \mathrm{S}^k \to \mathrm{M}_n(\mathrm{C}(X))$ is naturally identified with an element of
$\mathrm{M}_n(\mathrm{C}(X \times \mathrm{S}^k))$.  It follows that if $k,l$ as in Proposition \ref{homandrank} satisfy $k-l \geq 4\mathrm{dim}(X) +4r$, then any two such maps are homotopic in $S$, so that $\pi_r(S)$ vanishes.   }
\end{rems}

\begin{lms}\label{localextend}
Let $X$ be a compact metric space, and let $Y \subseteq X$ be closed.  Let $f,g:X \to \mathbb{Z}^+$ be bounded functions which are lower semicontinuous and upper semicontinuous, respectively.  Assume that $f(x) \geq g(x)$ for each $x \in X$, and let $a \in \mathrm{M}_n(\mathrm{C}(Y))$ be positive and satisfy
\[
g(y) \leq \mathrm{rank}(a(y)) \leq f(y), \ \forall y \in Y.
\]
It follows that there are an open set $U \supseteq Y$ and a positive element $b \in \mathrm{M}_n(\mathrm{C}_b(U))$ such that $b|_Y=a$ and 
\[
g(z) \leq \mathrm{rank}(b(z)) \leq f(z), \ \forall z \in U.
\]
\end{lms}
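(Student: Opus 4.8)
The plan is to extend $a$ to a positive element $\tilde{a}\in\mathrm{M}_n(\mathrm{C}(X))$ via the Tietze extension theorem (e.g.\ by lifting a self-adjoint representative of $a$ and applying the map $t\mapsto t_+$), and then to subtract from $\tilde{a}$ a carefully chosen nonnegative scalar-valued continuous function $\rho$ which vanishes on $Y$; the desired element will be $b:=(\tilde{a}-\rho)_+|_U$ (with $\rho$ multiplied by the unit of $\mathrm{M}_n$) for a suitable open $U\supseteq Y$. Because $\rho|_Y=0$ and $a\geq 0$ we get $b|_Y=(\tilde{a}|_Y)_+=a$ automatically, so everything reduces to choosing $\rho$ and $U$ so that the two rank bounds hold on $U$. (Our $b$ in fact extends continuously to all of $X$, which more than meets the requirement $b\in\mathrm{M}_n(\mathrm{C}_b(U))$.)

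For the lower bound I would apply Lemma \ref{ranketa} to the triple $(Y,a,g|_Y)$ to obtain $\eta>0$ with $\mathrm{rank}(\chi_{(\eta,\infty]}(a(y)))\geq g(y)$ for all $y\in Y$, equivalently $\mathrm{rank}((\tilde{a}-\eta)_+(y))\geq g(y)$ on $Y$. The set $U_1:=\{x\in X\mid\mathrm{rank}((\tilde{a}-\eta)_+(x))\geq g(x)\}$ is then open --- since $x\mapsto\mathrm{rank}((\tilde{a}-\eta)_+(x))$ is lower semicontinuous (it is a finite sum of indicator functions of the open sets $\{x\mid\lambda_j(\tilde{a}(x))>\eta\}$), $g$ is upper semicontinuous, and both are $\mathbb{Z}$-valued --- and it contains $Y$. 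Provided we arrange $0\leq\rho\leq\eta$ everywhere, we will have $(\tilde{a}-\rho)_+\geq(\tilde{a}-\eta)_+$, so the bound $\mathrm{rank}(b(x))\geq g(x)$ holds on $U_1$, hence on any $U\subseteq U_1$.

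The upper bound is the crux. Let $\lambda_j(\cdot)$ denote the $j$-th largest eigenvalue of a positive $n\times n$ matrix, with the convention $\lambda_j=0$ for $j>n$, and set $\sigma(x):=\lambda_{f(x)+1}(\tilde{a}(x))$. Using that $f$ is lower semicontinuous and $\mathbb{Z}$-valued (so $f\geq f(x_0)$ near any $x_0$), together with the continuity of $x\mapsto\lambda_j(\tilde{a}(x))$ for each fixed $j$, one checks that $\sigma$ is upper semicontinuous; and since $\mathrm{rank}(a(y))\leq f(y)$ for $y\in Y$, the matrix $a(y)$ has at most $f(y)$ nonzero eigenvalues, so $\sigma|_Y\equiv 0$. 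Now if $\rho(x)\geq\sigma(x)$, then every eigenvalue of $\tilde{a}(x)$ of index exceeding $f(x)$ is $\leq\sigma(x)\leq\rho(x)$ and is therefore annihilated by the cut-off, so $\mathrm{rank}((\tilde{a}-\rho)_+(x))\leq f(x)$. Thus it remains only to produce a continuous $\rho:X\to[0,\eta]$ with $\rho|_Y=0$ and $\rho\geq\sigma$ on a neighbourhood of $Y$.

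For this I would exploit that $\sigma$ is upper semicontinuous, nonnegative, and vanishes on the compact set $Y$: a routine compactness argument (if some $x_k$ with $\mathrm{dist}(x_k,Y)\to 0$ had $\sigma(x_k)$ bounded away from $0$, a convergent subsequence would violate upper semicontinuity at its limit point in $Y$) shows that $m(r):=\sup\{\sigma(x)\mid\mathrm{dist}(x,Y)\leq r\}\to 0$ as $r\to 0^+$. Choosing a continuous majorant $\tilde{m}\geq m$ with $\tilde{m}(0)=0$, then $\delta>0$ with $\tilde{m}<\eta$ on $[0,\delta)$, and setting $\rho(x):=\min(\tilde{m}(\mathrm{dist}(x,Y)),\eta)$ gives exactly what is needed: $\rho$ is continuous, vanishes on $Y$, is bounded by $\eta$, and on $U':=\{x\mid\mathrm{dist}(x,Y)<\delta\}$ it equals $\tilde{m}(\mathrm{dist}(x,Y))\geq m(\mathrm{dist}(x,Y))\geq\sigma(x)$. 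Taking $U:=U_1\cap U'$ and $b:=(\tilde{a}-\rho)_+|_U$ then completes the argument. The one genuinely delicate point is this construction of $\rho$: recognising that the obstruction to the upper bound is governed by the single upper semicontinuous function $\sigma$, and that $\sigma$, vanishing on $Y$, decays uniformly enough near $Y$ to admit a continuous majorant vanishing on $Y$. The remaining verifications are bookkeeping with semicontinuity.
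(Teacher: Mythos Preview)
Your argument is correct, and it shares with the paper's proof both the overall ansatz $b=(\tilde{a}-\rho)_+$ for a continuous scalar $\rho\geq 0$ vanishing on $Y$ and bounded by an $\eta$ coming from Lemma~\ref{ranketa}, and the treatment of the lower bound via lower semicontinuity of rank.  Where you genuinely diverge from the paper is in the upper bound.  The paper builds an elaborate nested system $U_1\supseteq U_2\supseteq\cdots$ of neighbourhoods of $Y$, stratified by the level sets $E_i=\{f\leq n_i\}$, so that each $x\in\overline{U_n}\cap(E_i\setminus E_{i-1})$ lies within $\delta_n$ of some $y\in Y$ with $f(y)\leq f(x)$; the cut-off $\rho$ is then chosen to exceed $\|\tilde{a}(x)-\tilde{a}(y)\|$ on each shell, forcing $\mathrm{rank}(b(x))\leq\mathrm{rank}(a(y))\leq f(y)\leq f(x)$.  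You instead isolate the single scalar obstruction $\sigma(x)=\lambda_{f(x)+1}(\tilde{a}(x))$, observe that it is upper semicontinuous (via lower semicontinuity of $f$ and continuity of each $\lambda_j$) and vanishes on $Y$, and then majorize it near $Y$ by a continuous function vanishing on $Y$.  This is cleaner: it names exactly what $\rho$ must dominate and avoids the bookkeeping with the $E_i$ and the indices $i_1<\cdots<i_t$.  The paper's approach, on the other hand, works entirely with norm estimates and never invokes individual eigenvalue functions, which keeps it closer to the Cuntz-comparison language used elsewhere in the article.  The only place your write-up could be tightened is the existence of the continuous majorant $\tilde{m}\geq m$ with $\tilde{m}(0)=0$; since $m$ is non-decreasing with $m(0^+)=0$, one can for instance take $\tilde{m}(r)=\tfrac{2}{r}\int_r^{2r}m(s)\,ds$ (or a piecewise-linear interpolant through values $1/2^{k-1}$ at points $r_k\searrow 0$ with $m(r_k)\leq 1/2^k$), but as written this step is asserted rather than exhibited.
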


\begin{proof}
By Tietze's Extension Theorem we can find an open set $V \supseteq Y$ and a positive element $\tilde{a} \in \mathrm{M}_n(\mathrm{C}_b(V))$ such that $\tilde{a}|_Y=a$.  The map $z \mapsto \mathrm{rank}(\tilde{a}(z))$ is lower semicontinuous on $V$, and so for each $y \in Y$ there is an open neighbourhood $W_y$ of $y$ in $V$ with the property that 
\[
\mathrm{rank}(\tilde{a}(z)) \geq \mathrm{rank}(\tilde{a}(y)), \ \forall z \in W_y.
\]
The function $g$, on the other hand, is upper semicontinuous, and so for each $y \in Y$ there is an open neighbourhood $U_y$ of $y$ in $V$ with the property that
\[
g(z) \leq g(y), \ \forall z \in U_y.
\]
Setting $E_y = W_y \cap U_y$ we have an open cover $\{E_y\}_{y \in Y}$ of $Y$ which has the property that 
\begin{equation}\label{lowerbd}
g(z) \leq g(y) \leq \mathrm{rank}(\tilde{a}(y)) \leq \mathrm{rank}(\tilde{a}(z)), \ \forall z \in \bigcup_{y \in Y} E_y.
\end{equation}
In other words, setting $U = \cup_{y \in Y} E_y$, we have an extension $\tilde{a}$ of $a$ to $U$ which satisfies the lower bound required by the conclusion of the Lemma.

Let $n_1 < n_2 < \cdots < n_k$ be the values taken by $f$.  Set 
\[
E_i = \{ x \in X \ | \ f(x) \leq n_i\}, \ 1 \leq i \leq k,
\]
and note that each $E_i$ is closed.  We set $E_0 = \emptyset$ as a notational convenience.  Let us take $\tilde{a}$ and $U$ as above;  by shrinking $U$ slightly, we may assume that $\tilde{a}$ is defined on $\overline{U}$.  Also, combining (\ref{lowerbd}) with Lemma \ref{ranketa}, we can find $\eta > 0$ such that 
\begin{equation}\label{ranklowerbd}
\mathrm{rank}((\tilde{a}-\eta^{'})_+(x)) \geq g(x), \ \forall x \in \overline{U}, 0 < \eta^{'} \leq \eta.
\end{equation}

The uniform continuity of $\tilde{a}$ on $\overline{U}$ implies that for each $n \in \mathbb{N}$ there is $\delta_n>0$ such that for each $x \in \overline{U}$ and $y \in Y$ satisfying $\mathrm{dist}(x,y)< \delta_n$ we have $\|\tilde{a}(x)-\tilde{a}(y)\|< \eta/2^n$.  Let $i_1 < i_2 < \cdots < i_t$ be the indices for which $Y \cap (E_{i_l}\backslash E_{i_l-1}) \neq \emptyset$.  Set $\delta_n^{(1)} = \delta_n$ and
\[
U_n^1 = \{x \in X \ | \ \mathrm{dist}(x,Y \cap (E_{i_1} \backslash E_{i_1-1})) <\delta_n^{(1)} \}.
\]
Suppose that we have found, for some $r < t$, open sets $U_n^1$ (as above), $U_n^2, \ldots U_n^r$ and positive tolerances $\delta_n^{(1)}, \ldots, \delta_n^{(r)} < \delta$ with the following properties:  
\begin{itemize}
\item[$\bullet$]
for each $s \leq r$,
\[
U_n^s = \{x \in X \ | \ \mathrm{dist}(x,(Y \cap (E_{i_s} \backslash E_{i_s-1})) \backslash (\cup_{l < s} U_n^l)) < \delta_n^{(s)}\}
\]
\item[$\bullet$] $U_n^s \cap E_{i_s-1} = \emptyset$.
\end{itemize}
Since $\cup_{l < r+1} U_n^l$ contains $Y \cap (E_{i_r} \backslash E_{i_r-1})$ and is open, we see that 
\[
(Y \cap (E_{i_{r+1}} \backslash E_{i_{r+1}-1})) \backslash (\cup_{l < r+1} U_n^l)
\]
is a closed subset of $E_{i_{r+1}} \backslash E_{i_{r+1}-1}$, and there is therefore $0< \delta_n^{(r+1)} < \delta_n$ such that the bullet points above hold with $s = r+1$, too.  Continuing in this manner we arrive at open sets $U_n^1,\ldots,U_n^t$, and we set $U_n = \cup_{l \leq t} U_n^l \supseteq Y$.  Let us fix $U_1$, and note that by shrinking the tolerances $\delta_n^{(l)}$ used to construct $U_n$ if necessary, we may assume that $\overline{U_{n+1}} \subseteq U_n$ for each $n \in \mathbb{N}$.  From our bullet points we extract the following fact:
\begin{enumerate}
\item[(i)] for each $1 < i \leq k$, for each $x \in \overline{U_{n}} \cap (E_i \backslash E_{i-1})$, there are $j \leq i$ and $y \in Y \cap (E_j \backslash E_{j-1})$ such that $\|\tilde{a}(x)-\tilde{a}(y)\| \leq \eta/2^n$.
\end{enumerate}
Fix a continuous function $f:\overline{U_1} \to [0,1]$ with the following properties:
\begin{enumerate}
\item[(ii)] $f(y)=0, \ \forall y \in Y$;
\item[(iii)] $\eta > f(x) \geq \eta/2^{n-1}, \ \forall x \in \overline{U_n} \backslash \overline{U_{n+1}}$.
\end{enumerate}
Now define $b(x) = (\tilde{a}(x)-f(x))_+$ for each $x \in \overline{U_1}$, and note that $a:\overline{U_1} \to \mathrm{M}_n$ is continuous since $f$ is.

If $y \in Y$ then $b(y) = \tilde{a}(y) = a(y)$ by (ii), and the desired rank inequality holds for $b$ by assumption.  If $x \in \overline{U_1} \backslash Y$, then (iii) and (\ref{ranklowerbd}) imply that $\mathrm{rank}(b(x)) \geq g(x)$.  It remains to establish our upper bound for such $x$.
If $x \in (\overline{U_n} \backslash Y) \cap (E_i \backslash E_{i-1})$, then by (i) there are $j \leq i$ and $y \in Y \cap (E_j \backslash E_{j-1})$ such that $\|\tilde{a}(x)-\tilde{a}(y)\|< \eta/2^n$.  Combining this with (iii) yields
\[
\mathrm{rank}(b(x)) = \mathrm{rank}((\tilde{a}(x) - f(x))_+) \leq  \mathrm{rank}(\tilde{a}(y)) = \mathrm{rank}(a(y)) = n_j  \leq n_i = f(x).
\]
Replacing $U$ with $U_1$ completes the proof.

\end{proof}

%Projections cannot be extended, but positive elements can with enough "space".  (Note that space doesn't help in the projection case.)

\begin{props}\label{globalextend}
Let $X$ be a compact metric space, and let $Y \subseteq X$ be closed.  Let $k,l \in \mathbb{N}$ satisfy $k-l \geq 4 \mathrm{dim}(X)$.  Suppose that $a \in \mathrm{M}_n(\mathrm{C}(Y))_+$ satisfies
\[
l \leq \mathrm{rank}(a(y)) \leq k, \ \forall y \in Y.
\]
It follows that there is a positive element $b \in \mathrm{M}_n(\mathrm{C}(X))$ such that $b|_Y=a$ and 
\[
l \leq \mathrm{rank}(b(x)) \leq k, \ \forall x \in X.
\]
\end{props}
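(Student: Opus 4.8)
The plan is to recognize the Proposition as an instance of a dimension-theoretic extension principle, once it is observed that the relevant target space is highly connected. Put $d = \dim X$ and let
\[
\mathcal{S}_0 = \{ v \in \mathrm{M}_n(\mathbb{C})_+ \ | \ l \leq \mathrm{rank}(v) \leq k \},
\]
which is a semialgebraic subset of the cone of positive matrices (it is the intersection of the closed determinantal set $\{\mathrm{rank} \leq k\}$ with the relatively open set $\{\mathrm{rank} \geq l\}$); in particular it is triangulable, hence an ANR. A positive element $c \in \mathrm{M}_n(\mathrm{C}(E))$ with $l \leq \mathrm{rank}(c(x)) \leq k$ on a space $E$ is exactly a continuous map $E \to \mathcal{S}_0$, so what must be shown is that $a\colon Y \to \mathcal{S}_0$ extends continuously over $X$. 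We may assume $k \leq n$; if $k > n$ the upper rank bound is automatic and the same argument runs with $k$ replaced by $n$.

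First I would establish that $\mathcal{S}_0$ is $(d-1)$-connected. It is nonempty (some $a(y)$ has rank $\geq l$, forcing $l \leq n$) and path connected by Proposition \ref{homandrank} applied to the one-point space, for which the hypothesis $4\dim(X) \leq k-l$ reads $0 \leq k-l$. More generally, the remark following Proposition \ref{homandrank}, again applied to the one-point space, gives $\pi_r(\mathcal{S}_0) = 0$ whenever $4r \leq k-l$; since $k-l \geq 4d$ this holds for all $r \leq d$, so $\mathcal{S}_0$ is $d$-connected, hence in particular $(d-1)$-connected.

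Next I would invoke the standard fact that a $(d-1)$-connected ANR is an absolute extensor for metrizable spaces of covering dimension at most $d$: a continuous map into such a space, defined on a closed subspace of a space of dimension $\leq d$, always extends over the whole space. (For $\mathcal{S}_0 = \mathrm{S}^d$ this is the sphere-extension characterization of covering dimension; the general case follows in the same way, via the usual passage to nerves of fine open covers and extension skeleton-by-skeleton, the obstruction to crossing the $m$-skeleton lying in a cohomology group with coefficients in $\pi_{m-1}(\mathcal{S}_0)$, which vanishes for $1 \leq m \leq d$ by the connectivity just established.) Applying this with $Y \subseteq X$ and $a$ produces the required $b\colon X \to \mathcal{S}_0$. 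If one prefers to localize first, Lemma \ref{localextend} with the constant bounds $g \equiv l$, $f \equiv k$ extends $a$ to a rank-constrained positive element on an open neighbourhood $U \supseteq Y$; choosing a closed $Z$ with $Y \subseteq \mathrm{int}\,Z \subseteq Z \subseteq U$ then reduces the problem to extending the restriction of that element to $\partial = Z \cap \overline{X \setminus Z}$ across $W = \overline{X \setminus Z}$, a compact metric space of dimension $\leq d$, the two pieces being glued along $\partial$ by the pasting lemma. This does not, however, sidestep the extension principle, which is still needed to fill in $W$.

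The hard part, and the place where the hypothesis $k-l \geq 4\dim X$ is consumed, is the $(d-1)$-connectivity of $\mathcal{S}_0$ — and this is handed to us by Proposition \ref{homandrank} and its remark, so the real work has already been done; the interplay of the upper and lower rank bounds is precisely what makes the connectivity of $\mathcal{S}_0$ grow with $k-l$. One subtlety to be careful about is that the extension principle genuinely requires $\mathcal{S}_0$ to be an ANR, which is why its semialgebraic (hence triangulable) structure should be recorded. If citing the absolute-extensor theorem is to be avoided, the extension can instead be built by hand in the style of Lemmas \ref{selecttriv} and \ref{rankincrease}, by reducing $a$ to well-supported form via \cite[Theorem 2.3]{To4} and extending the associated compatible system of constant-rank projections over $X$ using \cite[Proposition 3.2]{DNNP} and \cite[Theorem 3.1]{BE}, whose rank-loss terms bring in $\dim X$ and account for the factor $4$.
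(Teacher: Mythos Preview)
Your argument is correct but follows a genuinely different route from the paper's. The paper proceeds concretely: it uses Lemma~\ref{localextend} to extend $a$, with the rank bounds, to the closure of an open neighbourhood $U$ of $Y$; chooses an intermediate open $V$ with $Y \subseteq V \subseteq \overline{V} \subseteq U$ and a Urysohn function $f$ separating $\overline{V}$ from $U^c$; picks any rank-constrained positive $d$ over $V^c$; applies Proposition~\ref{homandrank} on the annulus $\overline{U}\cap V^c$ to obtain a rank-constrained path $h(t)$ from $a$ to $d$ there; and glues $b$ from $a$ on $\overline{V}$, $h(f(\cdot))$ on the annulus, and $d$ on $U^c$. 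You instead recast the problem as extending a map $Y \to \mathcal{S}_0$ over $X$, verify that $\mathcal{S}_0$ is a $d$-connected ANR (semialgebraicity for the ANR property, the Remark after Proposition~\ref{homandrank} at a one-point space for the connectivity), and then invoke the theorem that a $(d{-}1)$-connected ANR is an absolute extensor for metric spaces of dimension at most $d$. The paper's route is more elementary and entirely self-contained---it uses only path-connectedness over the annulus plus a Urysohn function, never higher homotopy groups nor an external extension theorem---whereas your route is more conceptual, making the obstruction-theoretic mechanism explicit, at the cost of importing semialgebraic triangulation and the AE characterisation for ANRs.
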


\begin{proof}This is a more or less standard argument.  Using Lemma \ref{localextend} we may assume that $a$ is defined on the closure $\overline{U}$ of an open superset $U$ of $Y$, and that $a$ still satisfies the required rank bounds on $\overline{U}$.  Fix an open set $V$ in $X$ such that $Y \subseteq V \subseteq \overline{V} \subseteq U$ and a continuous map $f:X \to [0,1]$ such that $f|_{\overline{V}}=0$ and $f|_{U^c}=1$.  Fix a positive element $d$ of $\mathrm{M}_n(\mathrm{C}(V^c))$ such that 
\[
l \leq \mathrm{rank}(d(x)) \leq k, \ \forall x \in V^c.
\]
Apply Lemma \ref{homandrank} to find a path $h(t)$ between $a|_{V^c}$ and $d$ satisfying the requisite rank bounds.  Finally, define
$b(x) = h(f(x))$. 
\end{proof}

\begin{props}\label{envelopeextend}
Let $X$ be a compact metric space for which $\mathrm{dim}(X)<\infty$, and let $Y \subseteq X$ be closed.  
Let $f,g:X \to \mathbb{Z}^+$ 
be bounded functions which are lower semicontinuous and upper semicontinuous, respectively, and suppose that $f(x)-g(x) \geq 4\mathrm{dim}(X)$ for each $x \in X$.  Let $a \in \mathrm{M}_n(\mathrm{C}(Y))_+$ satisfy
\[
g(y) \leq \mathrm{rank}(a(y)) \leq f(y), \ \forall y \in Y.
\]
It follows that there is $b \in \mathrm{M}_n(\mathrm{C}(X))_+$ such that $b|_Y=a$ and
\begin{equation}\label{envbound}
g(x) \leq \mathrm{rank}(b(x)) \leq f(x), \ \forall x \in X.
\end{equation}
\end{props}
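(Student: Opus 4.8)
The plan is to reduce this "varying--bound" extension problem to the constant--bound case already proved, Proposition \ref{globalextend}, by stratifying $X$ according to the finitely many values of $f$ and $g$ and gluing constant--bound extensions along the strata, using the path connectedness result \ref{homandrank} and the rank--adjustment lemmas \ref{selecttriv} and \ref{rankincrease} to reconcile things across stratum boundaries. First I would localise using Lemma \ref{localextend}: this gives an open $U \supseteq Y$ and a positive extension of $a$ to $\mathrm{M}_n(\mathrm{C}_b(U))$ still satisfying $g \leq \mathrm{rank}(\,\cdot\,) \leq f$, and after shrinking $U$ we may take this extension, still called $a$, to be defined on the compact set $\overline{U}$. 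Replacing $Y$ by $\overline{U}$ we are reduced to the case where $Y$ already contains an open neighbourhood of itself, which (as in the proof of Proposition \ref{globalextend}) lets one hope to build $b$ by fixing an auxiliary Urysohn function, keeping $b=a$ on a slightly smaller closed neighbourhood of $Y$, and transporting $a$ off that neighbourhood along a path inside the rank--constrained set of positive elements; the two descriptions then agree on the overlap by the choice of Urysohn function. The genuinely new input required is thus the analogue of Propositions \ref{globalextend} and \ref{homandrank} (existence of, and path connectedness among, positive elements realising prescribed rank bounds) for $f$ and $g$ not constant, restricted to the closed subspace $X \setminus (\text{small neighbourhood of } Y)$, which still has covering dimension at most $d$.

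I would obtain both of these by induction on the number $m$ of distinct values taken by the pair $(f,g)$. When $m=1$ the bounds are constant and this is exactly Proposition \ref{globalextend}, applied to $X$ or to the closed subspace in question. For the inductive step, put $k=\max_X f$ and $l=\min_X g$ and consider the closed sets $\{f\leq k-1\}$ and $\{g\geq l+1\}$, on each of which one of $f$, $g$ takes strictly fewer values; the extension over $X$ is then assembled from extensions over these closed "strata" (to which the inductive hypothesis applies, the covering dimension not having increased), joined by collars. On a collar the rank of $b$ must be allowed to jump between the regime forced by $\max f$ and a regime governed by smaller values of $f$, and symmetrically for $g$; one effects each such transition by a path from Proposition \ref{homandrank}, after first using Lemma \ref{rankincrease} to push the rank into the interior of the allowed window at each point and Lemma \ref{selecttriv} to split off a trivial summand that can be carried rigidly across the collar. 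The hypothesis $f(x)-g(x)\geq 4d$ is precisely the budget for this: roughly $2d$ of the slack is consumed reconciling the upper bound $f$ across a jump, and another $2d$ reconciling the lower bound $g$, which is why $4d$ rather than $2d$ is demanded.

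The hard part will be the bookkeeping in this induction. The delicate point is that the extremal level sets $\{f=\max_X f\}$ and $\{g=\min_X g\}$ are \emph{open}, not closed, so on passing to the closure of a stratum the values of $f$ can drop and those of $g$ can rise; one must check that a suitably reserved portion of the $4d$ gap survives on every sub--configuration actually used, which is exactly what forces the two--sided split of the budget rather than a single pass. One must also track, in the homotopy/collar step, that the relevant extension problems over products with $[0,1]$ stay within the connectivity range of Proposition \ref{homandrank} and its Remark; here it is convenient to carry out the rank--raising of Lemma \ref{rankincrease} with a little room to spare before invoking path connectedness, so that the effective window one works inside is wide enough. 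Granting this bookkeeping, Proposition \ref{globalextend} and the connectivity estimate \ref{homandrank} assemble, through the Tietze--type gluing, into the desired $b$ with $b|_Y=a$ and $g(x)\leq \mathrm{rank}(b(x))\leq f(x)$ for all $x\in X$.
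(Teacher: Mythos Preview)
Your instinct to stratify by the values of $f$ and $g$ is right, but the simultaneous induction you sketch has gaps you do not close. The closed sets $\{f\leq k-1\}$ and $\{g\geq l+1\}$ can overlap, and independent extensions over each (via the inductive hypothesis) need not agree there; the open set $\{f=k\}\cap\{g=l\}$ is covered by neither. Your collar-and-path repair would require precisely the variable-bound analogue of Proposition~\ref{homandrank} that you are trying to establish, so the argument threatens to be circular, and the ``$2d$ for $f$, $2d$ for $g$'' budget-splitting heuristic is never tied to a concrete mechanism. You concede this yourself (``granting this bookkeeping''), but the bookkeeping is the whole difficulty.

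The paper avoids all of this by stratifying \emph{sequentially} through \emph{nested} closed sets, which reduces every step to the constant-bound Proposition~\ref{globalextend} already in hand. First reduce to $f$ constant on $X\setminus Y$: the sublevel sets $E_i=\{f\leq n_i\}$ are closed and nested (since $f$ is lower semicontinuous and integer-valued), so one extends from $Y$ to $Y\cup E_1$, then to $Y\cup E_2$, and so on; at each step $f$ is constant on the new territory, so the constant-$f$ case suffices. With $f$ now constant off $Y$, stratify by the superlevel sets $F_j=\{g\geq m_j\}$ (closed and nested, since $g$ is upper semicontinuous) and extend to $Y\cup F_1, Y\cup F_2,\ldots$ in turn. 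At each such step the new territory lies in $X\setminus Y$, where both bounds are now constant; one invokes Lemma~\ref{localextend} to push the current $a$ onto a closed neighbourhood $\overline{U}$ preserving the variable bounds, chooses $V$ with $Y\subseteq V\subseteq\overline{V}\subseteq U$, and then extends $a|_{\overline{U}\setminus V}$ to $V^c$ by Proposition~\ref{globalextend} with the constant pair available there. No variable-bound version of Proposition~\ref{homandrank} is needed, no collar homotopies, and no budget splitting: the full $4d$ gap is available at every step because every step is literally the constant case.
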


\begin{proof}
Let $n_1 < n_2 < \cdots < n_k$ be the values attained by $f$, and set $E_i = \{ x \in X \ | \ f(x) \leq n_i \}$.  The $E_i$s then are closed.  
It suffices to consider the case where $f$ is constant on $X \backslash Y$, for if this case of the Proposition holds, then we may apply it successively to extend $a$ from $Y$ to $Y \cup E_1$, from $Y \cup E_1$ to $Y \cup E_2$, and so on.

Let $m_1 > m_2 > \cdots > m_k$ be the values taken by $g$, and let $r$ be the value attained by $f$ on $X \backslash Y$.  Set
\[
F_j = \{ x \in X \ | \ g(x) \geq m_j \}.
\]
Note that each $F_j$ is closed, and that $F_j \supseteq F_{j+1}$.  We will construct $b$ by making successive extensions to $Y \cup F_1, Y \cup F_2, \ldots, Y \cup F_k = X$.  First consider $Y$ as a closed subset of $Y \cup F_1$.  Using Lemma \ref{localextend}, we can extend $a$ to an open subset $U$ of $Y \cup F_1$ containing $Y$ in such a manner that the extension satisfies (\ref{envbound}) for each $x \in U$;  by shrinking $U$ slightly, we can assume that $a$ is defined and satisfies the said bounds on $\overline{U}$.  Let $V$ be an open subset of $Y \cup F_1$ such that $Y \subseteq V$ and $\overline{V} \subseteq U$.  Now extend $a|_{V^{c} \cap \overline{U}}$ to all of $V^{c}$ using Proposition \ref{globalextend}, so that the extension satisfies (\ref{envbound}), too.  This completes the extension of $a$ to $Y \cup F_1$.  The remaining extensions are carried out in the same manner.

\end{proof}

\section{Proof of Theorem \ref{waz}}\label{main}

Let us recall some of the terminology and results from \cite{P}.
%\begin{dfs}\label{rshdef}
 %A {\it recursive subhomogeneous algebra} (RSH algebra) is given by the following recursive definition.
%\begin{enumerate} 
   % \item[(i)] If $X$ is a compact Hausdorff space and $n \in \mathbb{N}$, then $\mathrm{M}_n(\mathrm{C}(X))$
    %is a recursive subhomogeneous algebra.
    %\item[(ii)] If $A$ is a recursive subhomogeneous algebra, $X$ is a
    %compact Hausdorff space, $X^{(0)} \subseteq X$ is closed, $\phi:A \to \mathrm{M}_k(\mathrm{C}(X^{(0)}))$
    %is a unital $*$-homomorphism, and
    %$\rho:\mathrm{M}_k(\mathrm{C}(X)) \to \mathrm{M}_k(\mathrm{C}(X^{(0)}))$ is the restriction homomorphism, then
    %the pullback
    %\[
    %A \oplus_{\mathrm{M}_k(\mathrm{C}(X^{(0)}))} \mathrm{M}_k(\mathrm{C}(X))
    %=\{(a,f) \in A \oplus \mathrm{M}_k(\mathrm{C}(X)) \ | \
    %\phi(a) = \rho(f) \}
    %\]
    %is a recursive subhomogeneous algebra.
%\end{enumerate}
%\end{dfs}
A C$^*$-algebra $R$ is a {\it recursive subhomogeneous (RSH) algebra} if it can be written as an iterated pullback of the
following form:
\begin{equation}\label{decomp2}
R = \left[ \cdots \left[  \left[ C_0 \oplus_{C_1^{(0)}} C_1 \right] \oplus_{C_2^{(0)}} C_2 \right]
\cdots \right] \oplus_{C_l^{(0)}} C_l,
\end{equation}
with $C_k = \mathrm{M}_{n(k)}(\mathrm{C}(X_k))$ for compact Hausdorff spaces $X_k$ and integers
$n(k)$, with $C_k^{(0)}=\mathrm{M}_{n(k)}(\mathrm{C}(X_k^{(0)}))$ for compact subsets $X_k^{(0)}
\subseteq X$ (possibly empty), and where the maps $C_k \to C_k^{(0)}$ are always the restriction maps.  
We refer to the expression in (\ref{decomp2}) as a {\it decomposition} for $R$.  Decompositions for RSH
algebras are not unique.

Associated with the decomposition (\ref{decomp2}) are:
\begin{enumerate}
\item[(i)] its {\it length} $l$;
\item[(ii)] its {\it $k^{\mathrm{th}}$ stage algebra}
\[
R_k = \left[ \cdots \left[  \left[ C_0 \oplus_{C_1^{(0)}} C_1 \right] \oplus_{C_2^{(0)}} C_2
\right] \cdots \right] \oplus_{C_k^{(0)}} C_k;
\]
\item[(iii)] its {\it base spaces} $X_0,X_1,\ldots,X_l$ and {\it total space} $X:=\sqcup_{k=0}^l X_k$;
\item[(iv)] its {\it matrix sizes} $n(0),n(1),\ldots,n(l)$ and {\it matrix size function $m:X \to
\mathbb{N}$} given by $m(x) = n(k)$ when $x \in X_k$ (this is called the {\it matrix size of $R$ at $x$});
%\item[(v)] its {\it minimum matrix size $\mathrm{min}_k n(k)$} and {\it maximum matrix size $\mathrm{max}_k n(k)$};
\item[(v)] its {\it topological dimension $\mathrm{dim}(X)$} and {\it topological dimension function $d:X \to \mathbb{N} \cup
\{0\}$} given by $d(x) = \mathrm{dim}(X_k)$ when $x \in X_k$;
\item[(vi)] its {\it standard representation $\sigma_R:R \to \oplus_{k=0}^l \mathrm{M}_{n(k)}(\mathrm{C}(X_k))$} defined to be the obvious inclusion;
\item[(vii)] the {\it evaluation maps $\mathrm{ev}_x:R \to \mathrm{M}_{n(k)}$} for $x \in X_k$, defined to be the composition of evaluation
at $x$ on $\oplus_{k=0}^l \mathrm{M}_{n(k)}(\mathrm{C}(X_k))$ and $\sigma_R$.
\end{enumerate}

\begin{rems}\label{rshmisc} {\rm
  If $R$ is separable, then the $X_k$ can be taken to be
metrizable.  It is clear from the construction of $R_{k+1}$ as a pullback of $R_k$ and $C_{k+1}$ that there is a
canonical surjective $*$-homomorphism $\lambda_k:R_{k+1} \to R_k$.  By composing several such, one has also a canonical surjective 
$*$-homomorphism from $R_j$ to $R_k$ for any $j >k$.  Abusing notation slightly, we denote these maps by $\lambda_k$ as well.
The C$^*$-algebra $\mathrm{M_m}(R) \cong R \otimes \mathrm{M}_m(\mathbb{C})$ is an RSH algebra in a canonical way.}
\end{rems}

%\begin{rems}\label{matrixrsh} {\rm
%The C$^*$-algebra $\mathrm{M_m}(R) \cong R \otimes \mathrm{M}_m(\mathbb{C})$ is an RSH algebra in a canonical way:  $C_k$ and $C_k^{(0)}$ are
%replaced with $C_k \otimes \mathrm{M}_m(\mathbb{C})$ and $C_k^{(0)} \otimes \mathrm{M}_m(\mathbb{C})$, respectively, and the
%clutching maps $\phi_k:R_k \to C_{k+1}^{(0)}$ are replaced with the amplifications 
%\[
%\phi_k \otimes \mathbf{id}_m:C_k \otimes \mathrm{M}_m(\mathbb{C}) \to C_{k+1}^{(0)} \otimes \mathrm{M}_m(\mathbb{C}).
%\]
%From here on we assume that $\mathrm{M}_m(R)$ is equipped with this canonical decomposition whenever $R$ is given with
%a decomposition.  We will abuse notation
%by using $\phi_k$ to denote both the original clutching map in the given decomposition for $R$ and its amplified versions.}
%\end{rems}

Each unital separable ASH algebra is the limit of an inductive system of RSH algebras by the main result of \cite{NW}, whence the following definition of slow dimension growth is sensible.

\begin{dfs}\label{sdg}
Let $(A_i,\phi_i)_{i \in \mathbb{N}}$ be a direct system of RSH algebras with each $\phi_i:A_i \to A_{i+1}$ a unital $*$-homomorphism.  Let $l_i$ be the length of $A_i$, $n_i(0),n_i(1),\ldots,n_i(l_i)$ its matrix sizes, and $X_{i,0},X_{i,1},\ldots,X_{i,l_i}$ its base spaces.  We say that the system $(A_i,\phi_i)$ has slow dimension growth if
\[
\limsup_{i} \left( \mathrm{max}_{0 \leq j \leq l_i}  \ \frac{\mathrm{dim}(X_{i,j})}{n_i(j)} \right)= 0.
\]
If $A$ is a unital ASH algebra, then we say it has slow dimension growth if it can be written as the limit of a slow dimension growth system as above.
\end{dfs}
\noindent
This definition is equivalent to that of Phillips (\cite[Definition 1.1]{P2}) for simple algebras, as shown by the proof of \cite[Theorem 5.3]{To4}.  It is, however, suitable only for simple algebras.

For an inductive system as above with limit algebra $A$, we let $\phi_{i \infty}:A_i \to A$ denote the canonical $*$-homomorphism.
Before proving the main result of this section, we need one more Lemma.

\begin{lms}\label{semicon}
Let $X$ be a topological space and let $\alpha:X \to \mathbb{R}$ be bounded and continuous.  Given $n \in \mathbb{N}$, define maps $\overline{\alpha}_n, \underline{\alpha}_n:X \to \mathbb{R}$ as follows:  
\begin{itemize}
\item[$\bullet$] $\underline{\alpha}_n$ is the largest lower semicontinuous function on $X$ which takes values in $\{k/n \ | \ k \in \mathbb{Z}\}$ and satisfies $\underline{\alpha}_n \leq \alpha$; 
\item[$\bullet$] $\overline{\alpha}_n$ is the smallest upper semicontinuous function on $X$ which takes values in $\{k/n \ | \ k \in \mathbb{Z} \}$ and satisfies $\overline{\alpha}_n \geq \alpha$.  
\end{itemize}

It follows that if $f:X \to \mathbb{R}$ is any function taking values in $\{k/n \ | \ k \in \mathbb{Z}\}$ and satisfying $f \geq \alpha$,then $f \geq \overline{(\alpha-\delta)}_n$ for each $\delta>0$;  and clearly if $f$ is instead lower semicontinuous and satisfies $f \leq \alpha$, then $f \leq \underline{\alpha}_n$.  Finally, we have $|f(x)-\underline{\alpha}_n(x)|, \ |f(x)-\overline{\alpha}_n| < 2/n$.
\end{lms}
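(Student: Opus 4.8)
The plan is to produce explicit descriptions of $\underline{\alpha}_n$ and $\overline{\alpha}_n$ and to read every assertion off from them. Write $\rho(t)$ for the least element of $(1/n)\mathbb{Z}$ that is $\geq t$ and $\lambda(t)$ for the greatest element of $(1/n)\mathbb{Z}$ that is $\leq t$, so that $t \leq \rho(t) < t+1/n$ and $t-1/n < \lambda(t) \leq t$, and both maps are nondecreasing. I would set, for $x \in X$,
\[
\Phi(x) = \inf_{U \ni x} \rho\Big(\sup_{y \in U}\alpha(y)\Big), \qquad \Psi(x) = \sup_{U \ni x}\lambda\Big(\inf_{y \in U}\alpha(y)\Big),
\]
the infimum and supremum being taken over all open neighbourhoods $U$ of $x$. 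Since $\alpha$ is bounded, for fixed $x$ the collection $\{\rho(\sup_U\alpha) : U \ni x\}$ is a nonempty subset of $(1/n)\mathbb{Z}$ bounded below (by $\inf_X\alpha$), hence has a least element; so $\Phi$ is $(1/n)\mathbb{Z}$-valued, and dually so is $\Psi$. If $\Phi(x_0)<c$ then some open $U_0 \ni x_0$ witnesses $\rho(\sup_{U_0}\alpha)<c$, and then $\Phi \leq \rho(\sup_{U_0}\alpha) < c$ throughout $U_0$; thus $\Phi$ is upper semicontinuous, and dually $\Psi$ is lower semicontinuous. Finally $\rho(\sup_U\alpha) \geq \sup_U\alpha \geq \alpha(x)$ for every $U \ni x$ gives $\Phi \geq \alpha$, and symmetrically $\Psi \leq \alpha$.

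Next I would verify extremality, which simultaneously shows that the functions named in the statement exist and identifies them as $\overline{\alpha}_n = \Phi$, $\underline{\alpha}_n = \Psi$. Let $g$ be any upper semicontinuous $(1/n)\mathbb{Z}$-valued function with $g \geq \alpha$, and fix $x$. By upper semicontinuity there is an open $U \ni x$ with $g(y) < g(x)+1/n$ for all $y \in U$; since $g$ takes values in $(1/n)\mathbb{Z}$, whose consecutive elements are $1/n$ apart, this forces $g \leq g(x)$ on $U$, so $\sup_U\alpha \leq \sup_U g \leq g(x)$ and hence $\Phi(x) \leq \rho(\sup_U\alpha) \leq \rho(g(x)) = g(x)$. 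Thus $\Phi \leq g$; since $\Phi$ is itself upper semicontinuous, $(1/n)\mathbb{Z}$-valued and $\geq\alpha$, it is the smallest such function, so $\overline{\alpha}_n = \Phi$. The argument for $\underline{\alpha}_n=\Psi$ is dual.

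The three assertions now follow. If $f$ is lower semicontinuous, $(1/n)\mathbb{Z}$-valued and $f \leq \alpha$, then $f$ is one of the functions of which $\underline{\alpha}_n=\Psi$ is the largest, so $f \leq \underline{\alpha}_n$. For the first assertion, let $f$ be any $(1/n)\mathbb{Z}$-valued function with $f \geq \alpha$ and let $\delta>0$; the function $\alpha-\delta$ is again bounded and continuous, so the formula above applies to it, with $\sup_U(\alpha-\delta) = \sup_U\alpha-\delta$. Fix $x$ and, using continuity of $\alpha$, choose an open $U \ni x$ with $\sup_{y\in U}\alpha(y) \leq \alpha(x)+\delta$; then $\sup_U\alpha-\delta \leq \alpha(x) \leq f(x)$, and since $f(x) \in (1/n)\mathbb{Z}$ we get $\rho(\sup_U\alpha-\delta) \leq f(x)$, whence $\overline{(\alpha-\delta)}_n(x) \leq \rho(\sup_U\alpha-\delta) \leq f(x)$; as $x$ was arbitrary, $f \geq \overline{(\alpha-\delta)}_n$. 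For the last assertion, continuity of $\alpha$ gives, for each $x$ and each $\varepsilon>0$, an open $U \ni x$ with $\sup_U\alpha < \alpha(x)+\varepsilon$ and $\inf_U\alpha > \alpha(x)-\varepsilon$; combining with $\rho(t)<t+1/n$ and $\lambda(t)>t-1/n$ yields $\alpha(x) \leq \overline{\alpha}_n(x) \leq \alpha(x)+1/n$ and $\alpha(x)-1/n \leq \underline{\alpha}_n(x) \leq \alpha(x)$, which in particular gives the stated estimates (and, for any $(1/n)\mathbb{Z}$-valued $f$ lying between $\underline{\alpha}_n$ and $\overline{\alpha}_n$, the estimates $|f(x)-\underline{\alpha}_n(x)|, |f(x)-\overline{\alpha}_n(x)| < 2/n$ by the triangle inequality).

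The only genuinely delicate point is the verification that $\Phi$ and $\Psi$ are $(1/n)\mathbb{Z}$-valued: an infimum (resp.\ supremum) of a set of points of $(1/n)\mathbb{Z}$ need not lie in $(1/n)\mathbb{Z}$, and one must use boundedness of $\alpha$ together with the order-isomorphism $(1/n)\mathbb{Z}\cong\mathbb{Z}$ to see that the relevant bounded family of values is actually attained. Everything else is elementary point-set topology.
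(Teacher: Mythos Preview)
Your argument is correct. The route, however, differs from the paper's in a noticeable way.

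The paper does not identify $\overline{\alpha}_n$ and $\underline{\alpha}_n$ explicitly. It first obtains existence in one line by taking pointwise suprema (respectively infima) over the relevant families of semicontinuous $(1/n)\mathbb{Z}$-valued functions, using boundedness of $\alpha$ to keep the result $(1/n)\mathbb{Z}$-valued. It then introduces simple pointwise auxiliary functions: $h_\alpha(x) = (k+1)/n$ whenever $k/n \leq \alpha(x) < (k+1)/n$ (a ``strict ceiling''), which is upper semicontinuous and satisfies $\alpha < h_\alpha \leq \alpha + 1/n$, hence $\overline{\alpha}_n \leq h_\alpha$; and dually $g_\alpha$. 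The inequality $f \geq \overline{(\alpha-\delta)}_n$ is read off from $f \geq h_{\alpha-\delta} \geq \overline{(\alpha-\delta)}_n$, since $f(x)=k/n \geq \alpha(x)$ forces $(\alpha-\delta)(x) < k/n$ and hence $h_{\alpha-\delta}(x) \leq k/n$. The final estimates are immediate from $|h_\alpha-\alpha|,\,|g_\alpha-\alpha| \leq 1/n$.

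By contrast, you give a genuine formula for $\overline{\alpha}_n$ and $\underline{\alpha}_n$ themselves via the neighborhood infimum/supremum of $\rho(\sup_U\alpha)$ and $\lambda(\inf_U\alpha)$, and then establish extremality directly. This is more work up front but pays off: every subsequent claim becomes a one-line computation from the formula, and you actually pin down $\overline{\alpha}_n$ rather than merely sandwiching it. The paper's approach is shorter because $h_\alpha$ and $g_\alpha$ are defined pointwise (no neighborhoods), and their semicontinuity follows immediately from continuity of $\alpha$; it trades the exact identification for brevity. Both approaches rely on the same underlying fact you flag at the end: a bounded subset of $(1/n)\mathbb{Z}$ attains its extrema.

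One minor remark: the ``$f$'' in the final estimate of the lemma is almost certainly a typo for ``$\alpha$'' (this is how it is used later in the paper), and indeed both your computation and the paper's give $|\alpha-\overline{\alpha}_n|,\,|\alpha-\underline{\alpha}_n| \leq 1/n$; your parenthetical covering the literal reading is a reasonable hedge.
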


\begin{proof}
Families of lower semicontinuous functions are closed under taking pointwise suprema.  Since $\alpha$ is bounded, the set of lower semicontinuous functions $f$ on $X$ taking values in $\{k/n \ | \ k \in \mathbb{Z} \}$ and satisfying $f \leq \alpha$ is not empty, and so $\underline{\alpha}_n$ exists.  A similar argument using infima of upper semicontinuous functions establishes the existence of $\overline{\alpha}_n$.  

Define $h_\alpha:X \to \{k/n \ | \ k \in \mathbb{Z} \}$ as follows:  if $k/n \leq \alpha(x) < (k+1)/n$, then $h(x) = (k+1)/n$.  It is straightforward to check that $h_\alpha$ is upper semicontinuous and $h_\alpha > \alpha(x)$ by definition.  If follows that $h_\alpha \geq \overline{\alpha}_n$.  Let $f$ be any function on $X$ taking values in $\{k/n \ | \ k \in \mathbb{Z}\}$ and satisfying $f \geq \alpha$.  
If $k/n < \alpha(x) < (k+1)/n$, then we must have 
\[
f(x) \geq (k+1)/n = h_\alpha(x) \geq \overline{\alpha_n}(x) \geq \overline{(\alpha-\delta)}_n(x).
\]
If $f(x) = k/n$, then $\alpha(x) \leq k/n$.  It follows that $(\alpha-\delta)(x) < k/n$, and so 
\[
f(x) = k/n \geq h_{\alpha-\delta}(x) \geq \overline{(\alpha-\delta)}_n(x).
\] 

By construction, we have
\[
0 \leq \overline{\alpha}_n(x)-\alpha(x)  = |\overline{\alpha}_n(x)-\alpha(x)| \leq h_\alpha(x) -\alpha(x) \leq 1/n < 2/n.
\]
For the other estimate, define $g_\alpha:X \to \{k/n \ | \ k \in \mathbb{Z} \}$ as follows:  if $k/n < \alpha(x) \leq (k+1)/n$, then set $g_\alpha(x) = k/n$.  It is straightforward to check that $g_\alpha$ is lower semicontinuous, and so
\[
0 \leq \alpha(x)-\underline{\alpha}_n(x) = |\alpha(x) -\underline{\alpha}_n(x)| \leq \alpha(x) - g_\alpha(x) \leq 1/n < 2/n.
\]

\end{proof}

\begin{thms}\label{ashrange}
Let $(A_i,\phi_i)$ be a direct system of RSH algebras with slow dimension growth, and 
let $A = \lim_i(A_i,\phi_i)$.  Assume that $A$ is simple.  Let $f$ be a strictly positive 
affine continuous function on $\mathrm{T}(A)$ and let $\epsilon>0$ be given.  It follows 
that there are $i_0,k \in \mathbb{N}$ and a positive element $a \in \mathrm{M}_k(A_{i_0})$ 
with the property that
\[
| f(\tau) - d_{\tau}(\phi_{i_0 \infty}(a)) | < \epsilon.
\]
\end{thms}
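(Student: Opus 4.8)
The plan is to realize $f$, up to error $\epsilon$, as the normalized rank function of a positive element assembled stage-by-stage inside one of the building blocks $A_{i_0}$, applying Proposition \ref{envelopeextend} at each gluing of a recursive subhomogeneous decomposition.

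First I would reduce to finite-stage data. Since $A$ is unital, the functions $\tau\mapsto\tau(\phi_{i\infty}(c))$, with $c$ running over the self-adjoint parts of the $A_i$, are uniformly dense in $\mathrm{Aff}(\mathrm T(A))$; together with strict positivity of $f$ and a routine perturbation this produces $i_1\in\mathbb N$ and a positive $b\in A_{i_1}$ with $|f(\tau)-\tau(\phi_{i_1\infty}(b))|<\epsilon/3$ for all $\tau$. Writing the total space of $A_{i_1}$ as $X=\bigsqcup_j X_{i_1,j}$, define the continuous function $\beta\colon X\to[0,\infty)$ by $\beta(x)=\mathrm{tr}_{n_{i_1}(j)}(\mathrm{ev}_x(b))$ for $x\in X_{i_1,j}$. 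The essential feature of this choice is that $\beta$ is of \emph{trace form}: for any tracial state $\sigma$ on $A_{i_1}$, represented by a probability measure $\mu$ on $X$, one has $\int_X\beta\,d\mu=\sigma(b)$; and whenever an evaluation map of $A_{i_1}$ decomposes, up to unitary equivalence, as a direct sum $\bigoplus_\iota\mathrm{ev}_{x_\iota}$, the value of $\beta$ at the corresponding point is the matching weighted average of the $\beta(x_\iota)$.

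Next, fix $i_0\ge i_1$ and $k>\|b\|$. By Definition \ref{sdg}, and the standard fact that the matrix sizes of a simple ASH algebra tend to infinity, I may take $i_0$ so large that every $\mathrm{dim}(X_{i_0,j})/n_{i_0}(j)$ is as small, and every $n_{i_0}(j)$ as large, as desired; replace $b$ by $\phi_{i_1 i_0}(b)$ and $\beta$ by the induced trace-form function on the total space of $A_{i_0}$ (still continuous, nonnegative, bounded by $k$). Now build a positive $a\in\mathrm M_k(A_{i_0})$ by induction up a decomposition $R_0,R_1,\dots,R_l$ of the recursive subhomogeneous algebra $\mathrm M_k(A_{i_0})$. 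At stage $m$, restricting the element $a_{m-1}\in R_{m-1}$ to the gluing set $X_m^{(0)}$ yields a positive element whose rank at a point $y$ is, by additivity of rank under the direct-sum decompositions of the evaluation maps combined with the weighted-average identity for $\beta$, within the \emph{same} tolerance of $\beta(y)\,n_{i_0}(m)$ as was already achieved at the earlier base points to which $y$ resolves --- so the discrepancy between the rank function and $\beta$ never accumulates with $m$. Using Lemma \ref{semicon} to round $\beta\,n_{i_0}(m)$ down and up with a slack of order $\epsilon\,n_{i_0}(m)$, choose a $\mathbb Z$-valued upper semicontinuous $g_m$ and lower semicontinuous $f_m$ on $X_m$ that sandwich the forced ranks on $X_m^{(0)}$ and satisfy $f_m-g_m\ge 4\,\mathrm{dim}(X_m)$ everywhere --- feasible precisely because slow dimension growth makes $\mathrm{dim}(X_m)$ negligible against $\epsilon\,n_{i_0}(m)$. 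Applying Proposition \ref{envelopeextend} extends $a_{m-1}$ over $X_m$ to $a_m$; iterating to $m=l$ gives $a$.

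By construction $|\mathrm{rank}(\mathrm{ev}_x(a))/n_{i_0}(j(x))-\beta(x)|<\epsilon/3$ for every $x$ in the total space of $A_{i_0}$, the gluing sets included. Representing the trace $\tau\circ\phi_{i_0\infty}$ by a probability measure $\mu_\tau$ on that total space, $d_\tau(\phi_{i_0\infty}(a))=\int \mathrm{rank}(\mathrm{ev}_x(a))\,n_{i_0}(j(x))^{-1}\,d\mu_\tau(x)$ is then within $\epsilon/3$ of $\int\beta\,d\mu_\tau=\tau(\phi_{i_1\infty}(b))$, which is within $\epsilon/3$ of $f(\tau)$, proving the theorem. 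The step I expect to be the real obstacle is exactly the inductive extension up the pullback tower: keeping the rank bounds simultaneously compatible at every $X_m^{(0)}$ while controlling the approximation \emph{uniformly}, independently of the (possibly large) length $l$. What makes this go through --- and the point I would take most care over --- is that the trace-form choice of $\beta$, together with additivity of rank under direct sums, forces the gluing constraints at each stage to agree with $\beta$ to precisely the precision already present at the primitive base points, so that slow dimension growth alone then supplies the $4\,\mathrm{dim}$-slack required by Proposition \ref{envelopeextend}.
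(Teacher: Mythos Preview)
Your approach is essentially identical to the paper's: approximate $f$ by an affine (equivalently, trace-form) function at a finite stage, push forward to some $A_{i_0}$ where slow dimension growth supplies the $4\,\mathrm{dim}$-slack, and then climb the RSH tower by repeated use of Proposition~\ref{envelopeextend} with the semicontinuous integer-valued envelopes from Lemma~\ref{semicon}; the paper makes exactly your key observation---that affineness of the target function together with additivity of rank under the direct-sum decomposition of the clutching representations forces the boundary rank on $X_{j+1}^{(0)}$ to agree with the target to the precision already achieved on the earlier base spaces (``It follows immediately that this same rank inequality holds with $k=j+1$ at each $x \in X_{j+1}^{(0)}$'').

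One small caution: the discrepancy does accumulate slightly, contrary to your claim. Passing from the real-valued bound $\mathrm{rank}/n \geq \beta - c$ on $X_m^{(0)}$ to an upper semicontinuous $\{k/n\}$-valued lower envelope via Lemma~\ref{semicon} only yields $\overline{(\beta-c-\delta)}_n$ for some $\delta>0$, so after extending, the tolerance becomes $c+\delta$. The paper handles this by fixing in advance summable increments $\delta_0,\dots,\delta_l$ with $3\epsilon/4 \leq \delta_0$ and $\sum_j \delta_j < \epsilon$, letting the tolerance grow from $\eta_j$ to $\eta_{j+1}=\eta_j+\delta_{j+1}$ at each stage. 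Since $l$ is finite this is pure bookkeeping, not a defect in your strategy.
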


\begin{proof}
For a compact metrizable Choquet simplex $K$ we let $\mathrm{Aff}(K)$ denote the set of 
continuous affine functions on $K$.  Each $\phi_i:A_i \to A_{i+1}$ induces a continuous 
affine map $\phi_i^{\sharp}:\mathrm{T}(A_{i+1}) \to \mathrm{T}(A_i)$ and a dual map
\[
\phi_i^{\bullet}:\mathrm{Aff}(\mathrm{T}(A_i)) \to \mathrm{Aff}(\mathrm{T}(A_{i+1}))
\]
given by $\phi_i^{\bullet}(f)(\tau) = f(\phi_i^{\sharp}(\tau))$.  It is well known that 
$\cup_{i \in \mathbb{N}} \ \phi_{i \infty}^{\bullet}(\mathrm{Aff}(\mathrm{T}(A_i)))$ is uniformly 
dense in $\mathrm{Aff}(\mathrm{T}(A))$, so we may assume that $f = \phi_{i \infty}^{\bullet}(g)$ 
for some $i \in \mathbb{N}$ and $g \in \mathrm{Aff}(\mathrm{T}(A_i))$.  Truncating and re-labeling 
our inductive sequence, we may assume that $i=1$.  We may also assume, by replacing $A$ with 
$\mathrm{M}_k(A)$ for some large enough $k \in \mathbb{N}$, that $\|g\| \leq 1$.  We shall also 
assume that $\epsilon <\|g\|$.

Set $\phi_{i,j} = \phi_{j-1} \circ \cdots \circ \phi_i$, and assume, contrary to our desire, that for each $j \geq 1$, for some $\tau_j \in \mathrm{T}(A_j)$, we have $\phi_{1,j}^{\bullet}(g)(\tau_j) \leq 0$.  Let 
\[
\gamma_j = (\phi_{1,j}^{\sharp}(\tau_j), \phi_{2,j}^{\sharp}(\tau_j),\ldots, \phi_{j-1,j}^{\sharp}(\tau_j), \tau_j, \tau_{j+1}, \tau_{j+2}, \ldots) \in \prod_{i = 1}^{\infty} \mathrm{T}(A_i).
\]
Since $\prod_{i=1}^{\infty} \mathrm{T}(A_i)$ is compact, the sequence $(\gamma_j)$ has a subsequence converging to some
$\eta = (\eta_1,\eta_2,\eta_3, \ldots)$.  Let $\gamma_j^{(i)}$ denote the $i^{\mathrm{th}}$ entry of the sequence $\gamma_j$.  We have that $\phi_{i-1,i}^{\sharp}(\gamma_j^{(i)}) = \gamma_j^{(i-1)}$ for each $i \leq j$, whence $\phi_{i-1,i}^{\sharp}(\eta_i) = \eta_{i-1}$ for each $i \in \mathbb{N}$.  It follows that $\eta$ defines an element of $\mathrm{T}(A)$, and that 
\[
f(\eta) = \phi_{1 \infty}^{\bullet}(g)(\eta) = g(\phi_{1 \infty}^{\sharp}(\eta)) = g(\eta_1).
\]
Now $\eta_1$ is a subsequential limit of the sequence $(\phi_{1,j}^{\sharp}(\tau_j))_{j \in \mathbb{N}}$, and we have
\[
g(\phi_{1,j}^{\sharp}(\tau_j)) = \phi_{1,j}^{\bullet}(g)(\tau_j) \leq 0, \ \forall j \in \mathbb{N}.
\]
It follows that $0 \geq g(\eta_1) = f(\eta)$, contradicting the strict positivity of $f$.  We conclude that for some $j_0 \in \mathbb{N}$, for each $\tau \in \mathrm{T}(A_{j_0})$, we have $\phi_{1, j_0}^{\bullet}(g)(\tau) > 0$.  Let us once again truncate and re-label our sequence, so that $j_0=1$.  

To complete the proof of the theorem, it will suffice to find $i_0 \geq 1$ and a positive element $b \in A_{i_0}$ with the property that 
\[
| \phi_{1, i_0}^{\bullet}(g)(\tau) - d_\tau(b) | < \epsilon, \ \forall \tau \in \mathrm{T}(A_{i_0});
\]
it is then straightforward to check that $a = \phi_{i_0 \infty}(b)$ has the required property.  Let us set $g_i = \phi_{1,i}^{\bullet}(g)$ for convenience.  Using the slow dimension growth of the system $(A_i,\phi_i)$ and the simplicity of the limit, find $i_0$ large enough that 
\[
(\epsilon/4) n_{i_0}(j) > 4 \mathrm{dim}(X_{i_0,j})+4, \ 0 \leq j \leq l_{i_0}.
\]
(The simplicity of $A$ guarantees that all of the $n_{i_0}(j)$s are large for large enough $i_0$, and this makes the "$+4$" term on the right hand side possible.)  From here on we will work only inside $A_{i_0}$, so let us avoid subscripts by re-naming this algebra B, re-naming its matrix sizes $n(0),\ldots,n(l)$, re-naming its base spaces $X_0,\ldots,X_l$, and setting $h = \phi_{1,i_0}^{\bullet}(g)$.  Thus, we have 
\begin{equation}\label{dimbound}
(\epsilon/4) n(j) > 4 \mathrm{dim}(X_{j})+4, \ 0 \leq j \leq l.
\end{equation}

The function $h$ defines strictly positive continuous functions $h_0,h_1,\ldots,h_l$ on $X_0,X_1,\ldots,X_l$, respectively, via its standard representation $\sigma$---the function $h_j$ is the restriction of $\sigma^{\bullet}(h)$ to $X_j$.  We need only find a positive element $b \in B$ such that 
\begin{equation}\label{toprove}
| h_j(x)-d_{\tau_x}(b) | < \epsilon, \ \forall x \in X_j \backslash X_j^{(0)},
\end{equation}
where $\tau_x$ denotes the extreme tracial state corresponding to evaluation at $x \in X_j \backslash X_j^{(0)}$ composed with the usual trace on $\mathrm{M}_{n(j)}$.  Combining Lemma \ref{semicon} with (\ref{dimbound}) we have
\[
\left|h_j(x) - \underline{h_j}_{n(j)}(x)\right|, \ \left|\overline{(h_j-3\epsilon/4)}_{n(j)}(x) - (h_j - 3\epsilon/4)(x)\right| < 2/n(j) < \epsilon/4, \ \forall x \in X_j,
\]
from which we extract, for $\delta \geq 3\epsilon/4$,
\begin{eqnarray}\label{dimbound2}
n(j) \left( \underline{h_j}_{n(j)}(x) - \overline{(h_j-\delta)}_{n(j)}(x) \right) & \geq& n(j)\left(h)j(x) - (h_j-\delta)(x) -4/n(j)\right) \\
& > & n(j)\delta - 4 \\
& \geq & n(j)3\epsilon/4 - 4 \\
& \stackrel{(\ref{dimbound})}{>}  & 4\mathrm{dim}(X_j)
\end{eqnarray}
for each $x \in X_j$.
We also have
\[
\left| \underline{h_j}_{n(j)}(x) - \overline{(h_j-\delta)}_{n(j)}(x) \right| \leq |h_j(x) - (h_j-\delta)(x)| \leq \delta, \ \forall x \in X_j.
\]

Fix strictly positive tolerances $\delta_0,\delta_1,\ldots,\delta_l$ such that $3\epsilon/4 \leq \delta_0$ and $\sum_{j=0}^l \delta_j < \epsilon$. Set $\eta_k = \sum_{j=0}^k \delta_j$.  Suppose that we have found a positive element $b_j$ of the $j^{\mathrm{th}}$ stage algebra $B_j$ (see terminology at the beginning of this Section) with the following property:
\begin{equation}\label{rankineq3}
(h_k-\eta_j)(x) \leq \mathrm{rank}(b_j(x))/n(k) \leq h_k(x), \ \forall x \in X_k, \    0 \leq k \leq j.
\end{equation}
It follows immediately that this same rank inequality holds with $k = j+1$ at each $x \in X_{j+1}^{(0)}$ by pushing forward with the map $\psi_j^{\bullet}$ induced by the clutching map $\psi_j$ (let us assume that the definition of $b_j$ over $X_{j+1}^{(0)}$ is given by pushing $b_j$ forward via $\psi_j$).  An application of Lemma \ref{semicon} and the fact that $\eta_{j+1} > \eta_j$ then gives
\[
\overline{(h_{j+1}-\eta_{j+1})}_{n(j+1)}(x) \leq \mathrm{rank}(b_j(x))/n(j+1) \leq \underline{h_{j+1}}_{n(j+1)}(x), \ \forall x \in X_{j+1}^{(0)}.
\]
We can now use (\ref{dimbound2}) and the inequality above in order to apply Proposition \ref{envelopeextend} with $X = X_{j+1}$, $Y = X_{j+1}^{(0)}$, and $a = b_j$ to find a positive element $b_{j+1} \in \mathrm{M}_{n(j+1)}(\mathrm{C}(X_{j+1}))$ which restricts to $b_j$ on $X_{j+1}^{(0)}$ and satisfies
\[
\overline{(h_{j+1}-\eta_{j+1})}_{n(j+1)}(x) \leq \mathrm{rank}(b_{j+1}(x))/n(j+1) \leq \underline{h_{j+1}}_{n(j+1)}(x), \ \forall x \in X_{j+1}.
\]
Abusing notation slightly, we let $b_{j+1}$ denote the positive element of $B_{j+1}$ which restricts to $b_j$ in $B_j$ and which agrees with the element $b_{j+1}$ above over $X_{j+1}$.  Now (\ref{rankineq3}) holds with $j+1$ in place of $j$.  Note that the existence of an appropriate $b_0$ follows from an application of Proposition \ref{envelopeextend} with $Y = \emptyset$, so that iteration of the process we have just described will lead to a positive element $b$ of $B$ satisfying
\[
(h_k-\eta_l)(x) \leq \mathrm{rank}(b(x))/n(k) \leq h_k(x), \ \forall x \in X_k, \    0 \leq k \leq l.
\]
If $x \in X_k \backslash X_k^{(0)}$, then $d_{\tau_x}(b) = \mathrm{rank}(b(x))/n(k)$.  This gives (\ref{toprove}), completing the proof of the Theorem.

\end{proof}

 Now we can prove our main results.  Throughout, $A$ is a unital simple separable ASH algebra with slow dimension growth.
 
 \vspace{2mm}
 \noindent
 \begin{proof} {\it (Theorem \ref{waz})} By \cite[Theorem 1.1]{To4}, $A$ has strict comparison of positive elements.  Let $\mathrm{SAff}(\mathrm{T}(A))$ denote the set of suprema of increasing sequences of continuous affine strictly positive functions on $\mathrm{T}(A)$.  By \cite[Theorem 2.5]{BT} and the comment thereafter, we know that
 \begin{eqnarray*}
 W(A \otimes \mathcal{Z}) & \cong & V(A \otimes \mathcal{Z}) \sqcup \mathrm{SAff}(\mathrm{T}(A \otimes \mathcal{Z})) \\
 & \cong & \mathrm{K}_0(A \otimes \mathcal{Z})_+ \sqcup \mathrm{SAff}(\mathrm{T}(A \otimes \mathcal{Z})) 
 \end{eqnarray*}
 where $V(A)$ denotes the Murray-von Neumann semigroup.  On the other hand, \cite[Theorem 2.5]{BT} (or rather, its proof) shows that any unital simple exact tracial C$^*$-algebra $B$ which has strict comparison of positive elements and the property that any strictly positive $f \in \mathrm{Aff}(\mathrm{T}(B))$ is uniformly arbitrarily close to a function of the form
 $\tau \mapsto d_\tau(a)$ for some $a \in (B \otimes \mathcal{K})_+$ must then also satisfy
  \begin{equation}\label{wform}
 W(B) \cong V(B) \sqcup \mathrm{SAff}(\mathrm{T}(B)).
 \end{equation}
 Using Theorem \ref{ashrange}, we see that (\ref{wform}) holds with $B=A$.  
 By \cite[Theorem 0.1 (2)]{P2}, $A$ has cancellation of projections, whence $V(A) \cong  \mathrm{K}_0(A)_+$.  Since $A$ is simple and has strict comparison, its $\mathrm{K}_0$-group is unperforated, so $\mathrm{K}_0(A)_+ \cong \mathrm{K}_0(A \otimes \mathcal{Z})_+$ by a result of Gong, Jiang, and Su (\cite{GJS}).  It is also known that $\mathrm{T}(A) \cong \mathrm{T}(A \otimes \mathcal{Z})$, since $\mathcal{Z}$ admits a unique normalised tracial state.  To conclude that $W(A) \cong W(A \otimes \mathcal{Z})$ we need to prove that the now obvious identification of these semigroups as sets is in fact an isomorphism of ordered semigroups.  This, however, follows from the description of the order and addition (for both cases) given in the comments preceding \cite[Theorem 2.5]{BT}.) 
  \end{proof}

\vspace{2mm}
\noindent
\begin{proof} {\it (Corollary \ref{zstabsdg})}
The forward implication follows from Theorems \ref{waz} and Winter's Theorem \ref{locfin}.  The reverse implication is Theorem 5.5 of \cite{TW}.
\end{proof}

\vspace{2mm}
\noindent
\begin{proof} {\it (Corollary \ref{class})} Let $\mathcal{C}$ denote the class of unital simple ASH algebras with slow dimension growth in which projections separate traces.  By Corollary \ref{zstabsdg}, each $A \in \mathcal{C}$ is $\mathcal{Z}$-stable, and so by the main result of \cite{LN} (based on \cite{Wi5}), we need only establish the conclusion of Corollary \ref{class} for the collection $\mathcal{C}^{'}$ consisting of algebras of the form $A \otimes \mathfrak{U}$ with $A \in \mathcal{C}$ and $\mathfrak{U}$ a UHF algebra of infinite type.  These algebras are $\mathcal{Z}$-stable unital simple ASH algebras with real rank zero, and so the desired classification result is given by Corollary 2.5 of \cite{Wi2}.
\end{proof}

 \end{document}